\documentclass[a4paper,draft,reqno,12pt]{amsart}
\usepackage[utf8]{inputenc}
\usepackage[T1,T2A]{fontenc}
\usepackage[english]{babel}
\usepackage{amsmath}
\usepackage{amssymb}
\usepackage{amscd}
\usepackage{amsthm}
\usepackage{euscript}
\usepackage[all]{xy}
\newtheorem{theorem}{Theorem}
\newtheorem{proposition}[theorem]{Proposition}
\newtheorem{conjecture}[theorem]{Conjecture}

\newtheorem{lemma}[theorem]{Lemma}

\newtheorem{corollary}[theorem]{Corollary}
\theoremstyle{definition}
\newtheorem{definition}[theorem]{Definition}

\theoremstyle{remark}
\newtheorem {remark}[theorem]{Remark}

\DeclareMathOperator{\lie}{Lie}
\DeclareMathOperator{\adj}{ad}

\DeclareMathOperator{\Aut}{Aut}

\DeclareMathOperator{\SL}{SL}

\DeclareMathOperator{\Der}{Der}

\DeclareMathOperator{\Ker}{Ker}

\def\GG{{\mathbb G}}

\def\CC{{\mathbb C}}
\def\KK{{\mathbb K}}

\def\ZZ{{\mathbb Z}}

\def\OOO{\mathcal{O}}

\renewcommand{\phi}{\varphi}
\renewcommand{\ge}{\geqslant}
\renewcommand{\le}{\leqslant}
\begin{document}
\date{}
\title{Infinite transitivity and polynomial vector fields}
\author{Rafael B. Andrist}
\address{Faculty of Mathematics and Physics, University of Ljubljana, Ljubljana, Slovenia}
\email{rafael-benedikt.andrist@fmf.uni-lj.si}
\author{Ivan Arzhantsev}
\address{Faculty of Computer Science, HSE University, Pokrovsky Boulevard 11, Moscow, 109028 Russia}
\email{arjantsev@hse.ru}
\subjclass[2020]{Primary 14R10, 17B66; \ Secondary 14R20, 22E65, 32M17}
\keywords{Affine space, automorphism, infinite transitivity, Lie algebra, derivation} 

\maketitle
\begin{abstract}
We prove that for many pairs $H_1, H_2$ of root subgroups of the automorphism group $\Aut(\CC^2)$ the diagonal action of the group generated by $H_1, H_2$ on $(\CC^2)^m$ 
has an open orbit for any positive integer $m$. The result is based on the study of the Lie algebra of polynomials in two variables with the standard Poisson bracket. 
\end{abstract} 

\section{Introduction}
\label{sec1}

In this work, we study the properties of multiple transitivity for groups acting on the affine plane $\CC^2$. It follows from the classical Jung theorem that the group of polynomial automorphisms $\Aut(\CC^2)$ is generated by the two-dimensional torus of diagonal matrices and the one-parameter unipotent subgroups of the form
$$
L_r\colon (x+\alpha y^r,y), \quad \alpha\in\CC \quad \text{and}\quad R_s\colon (x,y+\beta x^s), \quad \beta\in\CC,
$$
where $r$ and $s$ run through the non-negative integers. Let us consider the group $G_{r,s}$ generated by the two subgroups $L_r$ and $R_s$. 

We recall that an action of a group $G$ on a set $X$ is infinitely transitive if for any positive integer $m$ and any two $m$-tuples of pairwise distinct points in $X$ there is an element $g\in G$ that sends the first tuple to the second one. It is easy to see that the group $G_{r,s}$ acts on $\CC^2$ with an open orbit. By~\cite{AKZ}, the action of $G_{r,s}$ on its open orbit is infinitely transitive only if $rs=2$. It is asked in~\cite{AKZ} whether $rs=2$ implies that the action is indeed infinitely transitive. The positive answer is given in~\cite{LPS, CT}.  We give another proof of this result in Section~\ref{sec4}.  This proof connects the property of infinite transitivity of an action with the structure of a tangent algebra, and therefore appears promising for further generalizations.

Let us say that an action of a group $G$ on an algebraic variety $X$ is generically infinitely transitive if for any positive integer $m$ the diagonal action of $G$ on $X^m:=X\times\ldots\times X$ ($m$ times) has a Zariski open orbit. Clearly, any infinitely transitive action is generically infinitely transitive. 

The main result of this paper (Theorem~\ref{geninftra}) claims that the action of $G_{r,s}$ on $\CC^2$ is generically infinitely transitive if and only if $rs\ge 2$. It is easy to see
that the condition $rs\le 1$ is equivalent to the fact that $G_{r,s}$ is a linear algebraic group. 

The proof of Theorem~\ref{geninftra} is based on the study of subalgebras of the Lie algebra of polynomials $\CC[x,y]$ with the standard Poisson bracket; see Section~\ref{sec2}. 
We show that for the subalgebra $\lie(x^p,y^q)$ generated by the two monomials $x^p$ and $y^q$ we have 
$$
\lie(x^p, y^q) \cap \CC[x] = x^{p} \CC[x^{pq - p - q}] \quad \text{and} \quad \lie(x^p, y^q) \cap \CC[y] = y^{q} \CC[y^{pq - p - q}],
$$
provided $\min(p,q) \geq 2$ and $\max(p,q) \geq 3$; see Proposition~\ref{prop-gapalgebra}. Moreover, the subalgebra $\lie(x^p,y^q)$ has finite codimension in $\CC[x,y]$ if and only if
$(p,q)=(2,3)$ or $(3,2)$. 

In the same Section~\ref{sec2} we find explicit pairs of derivations that generate the Lie algebra of polynomial vector fields on $\CC^2$ with zero divergence or with constant divergence; see Propositions~\ref{pfirst},~\ref{psecond} and Theorem~\ref{fullalgebra}. These results complement the results of \cite{An-1, An-2, An-3}, where explicit finite generating sets for many Lie algebras of polynomial vector fields are found.

In order to go from Lie algebras to the corresponding transformation groups, we use Proposition~\ref{pimp} on approximation of polynomial flows. This result goes back to Varolin~\cite{Va}, some of its versions were used in \cite{An-1, An-2, An-3}. This approach is applicable not only to the affine plane $\CC^2$, but also to other affine surfaces; see Proposition~\ref{dan} for the case of Danielewski surfaces. 

Denote by $\GG_a$ the additive group of the ground field $\CC$. Any non-trivial regular action $\GG_a\times X\to X$ on an algebraic variety $X$ gives rise to a subgroup $H$ in
$\Aut(X)$ isomorphic to $\GG_a$. Such subgroups are called $\GG_a$-subgroups of $\Aut(X)$.  The subgroups $L_r$ and  $R_s$ are examples of $\GG_a$-subgroups in $\Aut(\CC^2)$.
These are precisely the $\GG_a$-subgroups normalized by the two-dimensional torus of diagonal matrices. Such $\GG_a$-subgroups are called root subgroups. Some results on transitivity properties for groups generated by $\GG_a$-subgroups can be found in~\cite{A, AFKKZ, AKZ} and references therein. 

In Corollary~\ref{two}, we use the fact that the Lie algebra of polynomial vector fields with zero divergence on $\CC^2$ is 2-generated and construct $\GG_a$-subgroups $H_1, H_2$ (one them is not root) such that the group $G$ generated by $H_1, H_2$ is infinitely transitive on~$\CC^2$. 

It is important to notice that the results on Lie algebras in Section~\ref{sec2} are obtained by algebraic methods and are valid over any field of characteristic zero. At the same time, the results on transitivity properties of group actions are based on Propositions~\ref{pimp},~\ref{vpp}, the proofs of which use analytical methods, in particular the Implicit Function Theorem. It would be interesting to obtain criteria for infinite transitivity or generic infinite transitivity in terms of the corresponding Lie algebras applying purely algebraic arguments.
 
\section{Some results on Lie algebras}
\label{sec2}

Let $\KK$ be a field of characteristic zero. Consider the Lie algebra $L$ of all polynomial vector fields $f_1(x,y)\frac{\partial}{\partial x}+f_2(x,y)\frac{\partial}{\partial y}$ on $\KK^2$ with zero divergence, i.e.,
$$
\frac{\partial f_1(x,y)}{\partial x}+\frac{\partial f_2(x,y)}{\partial y}=0. 
$$
\\
 well known that $L$ coincides with the Lie algebra of all volume-preserving vector fields, all symplectic vector fields, and all Hamiltonian vector fields on $\KK^2$. In particular,  vector fields from $L$ are precisely vector fields of the form
$$
V_f:=\frac{\partial f(x,y)}{\partial y}\frac{\partial}{\partial x}-\frac{\partial f(x,y)}{\partial x}\frac{\partial}{\partial y}
$$
for some $f\in\KK[x,y]$. As for the commutator, we have
$$
[V_f, V_h]=V_{\{f,h\}}, \quad \text{where} \quad \{f,h\}=\frac{\partial f(x,y)}{\partial x}\frac{\partial h(x,y)}{\partial y}-\frac{\partial f(x,y)}{\partial y}\frac{\partial h(x,y)}{\partial x}
$$
is the standard Poisson bracket on $\KK[x,y]$. We denote by $A$ the Lie algebra  $(\KK[x,y], \{\cdot, \cdot\})$. 
The map $f\mapsto V_f$ establishes an isomorphism of Lie algebras between $A/I$ and $L$, where $I$ is the central ideal of constants in $A$.

\smallskip

As the first result, we show that the Lie algebra $L$ is 2-generated. We provide a short direct proof. 

\begin{proposition} \label{pfirst}
The Lie algebra $L$ is generated by two locally nilpotent derivations 
$$
\partial_1= y^2\frac{\partial}{\partial x} \quad \text{and} \quad \partial_2= \frac{\partial}{\partial x}-x\frac{\partial}{\partial y}.
$$
\end{proposition}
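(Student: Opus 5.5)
Passing to Hamiltonians via $f\mapsto V_f$, we have $\partial_1=V_u$ with $u=\tfrac13 y^3$ and $\partial_2=V_g$ with $g=y+\tfrac12x^2$. Since $A/I\cong L$, it suffices to show that the Lie subalgebra $B\subseteq A$ generated by $y^3$ and $g$ satisfies $B+\KK=A$.

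First, a grading. Give $x$ weight $1$ and $y$ weight $2$. Then $\{A_a,A_b\}\subseteq A_{a+b-3}$, where $A_d$ denotes the weighted-homogeneous polynomials of weight $d$. Both generators are weighted-homogeneous: $y^3$ has weight $6$ and $g$ has weight $2$. Hence $B$ is graded. Moreover $\mathrm{ad}_g=x\partial_y-\partial_x$ lowers the weight by one and is locally nilpotent.

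Next, I produce low-degree elements by iterating $\mathrm{ad}_g$ on $y^3$. We have $\{g,y^3\}=3xy^2$. Further applications give elements of weights $4,3,2,1$; a direct computation should show that the weight-$1$ element is a nonzero multiple of $x$. From $x\in B$ I get $\{x,y^3\}=3y^2\in B$. Then $\{g,y^2\}=2xy$ and $\{g,xy\}=x^2-y$ up to sign. Combining $x^2-y$ with $g=y+\tfrac12x^2$ yields $x^2\in B$ and $y\in B$. So $B$ contains $x,y$ and $x^2,xy,y^2$, that is, the whole affine symplectic algebra $\mathfrak{sp}_2\ltimes\KK^2$ (modulo constants). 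The only genuinely computational point here is checking that no coefficient in the chain $y^3\to xy^2\to\cdots\to x$ vanishes. If it did, I would instead combine brackets with $x$ and $y^2$ differently, which is routine.

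Now I switch to the standard degree. Let $P_k$ be the homogeneous polynomials of ordinary degree $k$. Under $\mathfrak{sl}_2=\langle x^2,xy,y^2\rangle$ acting by the bracket, $P_k$ is the irreducible module $\mathrm{Sym}^k$. Since $y^3\in P_3\cap B$, irreducibility gives $P_3\subseteq B$. By induction on $k\ge3$, suppose $P_k\subseteq B$. The bracket $\{P_3,P_k\}\subseteq P_{k+1}$ is an $\mathfrak{sl}_2$-submodule of the irreducible $P_{k+1}$. It is nonzero, for instance $\{x^3,y^k\}=3k\,x^2y^{k-1}\ne0$. Hence $P_{k+1}\subseteq B$. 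Together with $P_1,P_2\subseteq B$, this gives $B+\KK=A$. Therefore $L$ is generated by $\partial_1$ and $\partial_2$.

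I expect the main obstacle to be only the explicit descent from $y^3$ to $x$ by $\mathrm{ad}_g$, which must land on a nonzero multiple of $x$. The high-degree step is clean because of $\mathfrak{sl}_2$-irreducibility.
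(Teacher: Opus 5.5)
Your proof is correct. Its first half is the paper's argument: the paper runs the same descent $\mathrm{ad}^i_{x^2+2y}(y^3)$ (your $g$ is $\tfrac12(x^2+2y)$). The step you left unchecked does go through, with $\mathrm{ad}_g^5(y^3)=90x$, so your fallback is unnecessary. Your weight grading explains why the chain must land in $\KK x$. Nonvanishing can also be seen without computation: in the coordinates $(x,g)$ one has $\mathrm{ad}_g=-\partial/\partial x$, while $y^3=(g-x^2/2)^3$ has $x$-degree $6$, so $\mathrm{ad}_g^6(y^3)=-90\neq 0$. The paper extracts $x^2$ and $y$ directly from the weight-$2$ element $y-2x^2$ together with $x^2+2y$, rather than going through $y^2$, $xy$, $x^2-y$.

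The real difference is how you reach high degrees. The paper bootstraps monomial by monomial:
\begin{itemize}
\item $\{\{x^2,y^l\},y^l\}=2l^2y^{2(l-1)}$ together with $\{x,y^l\}=ly^{l-1}$ gives every $y^s$;
\item a triple bracket of $y^3$ with $x^k$, together with $\{x^k,y\}=kx^{k-1}$, gives every $x^m$;
\item $\{x^m,y^s\}=ms\,x^{m-1}y^{s-1}$ gives the mixed monomials.
\end{itemize}
You instead use irreducibility of $P_k$ under $\mathfrak{sl}_2=\langle x^2,xy,y^2\rangle$ and the Jacobi identity to get $P_3\subseteq B$ and then $\{P_3,P_k\}=P_{k+1}$.

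Your route is shorter and more structural. It shows that any Lie subalgebra of $A$ containing all polynomials of degree at most $2$ and a single nonzero cubic form is all of $A$. The price is a representation-theoretic input: irreducibility of $\mathrm{Sym}^k$ over an arbitrary field of characteristic zero. This follows from the distinct $\mathrm{ad}_{xy}$-eigenvalues $k-2a$ on the monomials $x^ay^{k-a}$ and the nonvanishing raising and lowering coefficients. You also have to secure all of $P_2$ first. The paper's computation needs only $x$, $y$, $x^2$, $y^3$ and monomial identities as in Lemma~\ref{lem1}. That explicit $\mathrm{ad}$-iteration style is also what the paper reuses for Proposition~\ref{prop-gapalgebra} and Theorem~\ref{fullalgebra}.
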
 

\begin{proof}
Notice that $\partial_1= \frac{1}{3}V_{y^3}$ and $\partial_2=\frac{1}{2}V_{x^2+2y}$. So it suffices to show that the polynomials $x^2+2y$ and $y^3$ generate the Lie algebra $A$. 
Let $A_0$ be the Lie subalgebra in $A$ generated by $x^2+2y$ and $y^3$. We have
$$
\{x^2+2y,y^3\}=6xy^2, \ \{x^2+2y, xy^2\}=4x^2y-2y^2, \ \{x^2+2y, 2x^2y-y^2\}=4x^3-12xy,  
$$
$$
\{x^2+2y, x^3-4xy\}=6y-12x^2, \ \ \{x^2+2y,y-2x^2\}=10x, \ \ \{x^2+2y, x\}=-2.
$$
\smallskip
Since the polynomials $x^2+2y$ and $y-2x^2$ are not proportional, we conclude 
that $1,x,x^2,y$ lie in $A_0$. Further, we obtain
$$
\{\{x^2,y^l\},y^l\}=2l^2 y^{2(l-1)}.  
$$
This implies that if $y^l\in A_0$ then $y^{2(l-1)}\in A_0$ and $2(l-1)>l$ for $l\ge 3$. Since we have
$$
\{x,y^l\}=ly^{l-1},
$$
we conclude that $y^s\in A_0$ for all $s\in\ZZ_{\ge 0}$. In the same way we have 
$$
\{\{\{y^3,x^k\},x^k\},x^k\} =6 k^3 x^{3(k-1)}.  
$$
This shows that if $x^k\in A_0$ then $y^{3(k-1)}\in A_0$ and $3(k-1)>k$ for $k\ge 2$. Since we have
$$
\{x^k,y\}=kx^{k-1},
$$
we conclude that $x^m\in A_0$ for all $m\in\ZZ_{\ge 0}$. Finally, the equality
$$
\{x^m,y^s\}=msx^{m-1}y^{s-1}, 
$$
implies that all monomials in $x,y$ are in $A_0$ and so $A_0$ coincides with $A$. 
\end{proof} 

\begin{remark}
The fact that the Lie algebra $L$ is 2-generated can also be deduced from~\cite[Theorem~28]{AH} with $n=1$. 
\end{remark} 

Let us take a closer look at the structure of the Lie algebra $A=(\KK[x,y], \{\cdot, \cdot\})$. 

\begin{lemma} \label{lem1}
We have
$$
\{x^p y^q, x^r y^s\} = (p s - q r)  x^{p+r-1} y^{q+s-1} = 
\begin{vmatrix}
p & q \\ 
r & s 
\end{vmatrix} 
x^{p+r-1} y^{q+s-1} . 
$$
\end{lemma}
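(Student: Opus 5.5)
This is a direct computation from the definition of the Poisson bracket
$$
\{f,h\}=\frac{\partial f}{\partial x}\frac{\partial h}{\partial y}-\frac{\partial f}{\partial y}\frac{\partial h}{\partial x}.
$$
We substitute the two monomials $f=x^p y^q$ and $h=x^r y^s$ and compute their partial derivatives:
$$
\frac{\partial f}{\partial x}=p x^{p-1}y^q,\qquad \frac{\partial f}{\partial y}=q x^p y^{q-1},
$$
$$
\frac{\partial h}{\partial x}=r x^{r-1}y^s,\qquad \frac{\partial h}{\partial y}=s x^r y^{s-1}.
$$

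Multiplying out, the first product is $ps\,x^{p+r-1}y^{q+s-1}$ and the second is $qr\,x^{p+r-1}y^{q+s-1}$. Both terms carry the same monomial $x^{p+r-1}y^{q+s-1}$, so their difference is $(ps-qr)\,x^{p+r-1}y^{q+s-1}$. The coefficient $ps-qr$ is exactly the displayed $2\times 2$ determinant.

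The only point needing care is the boundary case where an exponent is $0$, since then a negative power appears formally. For example, if $p=0$ the term $p x^{p-1}y^q$ is really the zero polynomial, and the formula still holds because the coefficient $p$ vanishes. So in any product containing a formally negative exponent, the accompanying integer factor is zero, and the identity holds in $\KK[x,y]$ as written (with the understanding that a term with coefficient zero is zero).

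If $p+r-1<0$ or $q+s-1<0$, then necessarily both relevant exponents are zero. For instance, $p+r-1<0$ forces $p=r=0$, and then the determinant $ps-qr$ is $0$, so the right-hand side is zero as well. No other obstacle arises.
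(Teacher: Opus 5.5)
Your proposal is correct and is exactly the direct computation the paper has in mind: it states only that the proof is straightforward. Your remark on zero exponents, where any formally negative power carries a vanishing coefficient, is a harmless and accurate extra check.
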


The proof is straightforward. 

\smallskip

Let us define an operator $\adj_f\colon A\to A$ with $f\in A$ by $\adj_f(g)=\{f,g\}$. 

\begin{lemma} \label{lem2}
Let $p,q,n$ be positive integers with $n\le pq$ and $\epsilon$ be an element of $\KK$. Then 
\[
\adj_{x^p - \varepsilon y}^{n}(y^q) = \sum_{k=0}^s c_{k,n} x^{p(q-k) - n} y^k,
\]
where $s=\lfloor q-\frac{n}{p}\rfloor$. Moreover, we have 
the recursion relation
\[
c_{k, n+1} = \varepsilon (p (q - k) - n) c_{k,n} + p (k+1) c_{k+1,n}
\]
with $c_{k,n}=0$ for $k>s$.
\end{lemma}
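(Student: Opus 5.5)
Argue by induction on $n$, using Lemma~\ref{lem1} to compute the bracket of $x^p-\varepsilon y$ with a general monomial. For the base case $n=0$, $\adj^0(y^q)=y^q$, which has the stated form with $s=q$, $c_{q,0}=1$ and all other $c_{k,0}=0$. Indeed, only $k=q$ survives, since the exponent of $x$ is $p(q-k)$ and must be nonnegative.

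For the inductive step, first compute the bracket on a single monomial. By Lemma~\ref{lem1},
$$
\{x^p, x^a y^k\} = pk\, x^{a+p-1}y^{k-1}
$$
and
$$
\{y, x^a y^k\} = -a\, x^{a-1}y^k ,
$$
so that
$$
\adj_{x^p-\varepsilon y}(x^a y^k) = pk\, x^{a+p-1}y^{k-1} + \varepsilon a\, x^{a-1}y^k .
$$
Apply this with $a=p(q-k)-n$. The first term gives $pk\, x^{p(q-(k-1))-(n+1)}y^{k-1}$ and the second gives $\varepsilon(p(q-k)-n)\,x^{p(q-k)-(n+1)}y^k$. Both have the form $x^{p(q-j)-(n+1)}y^j$, so the shape of the sum is preserved with $n$ replaced by $n+1$.

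Collecting the coefficient of $x^{p(q-k)-(n+1)}y^k$ gives the contribution $\varepsilon(p(q-k)-n)c_{k,n}$ from the $k$-term. It also gives $p(k+1)c_{k+1,n}$ from the $(k+1)$-term via the first summand. This is exactly the stated recursion.

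The main point needing care is the upper index $s$ and nonnegativity of exponents. The coefficient $a=p(q-k)-n$ multiplies the second term, so when $a=0$ no term with negative exponent is created. The first term has $x$-exponent $a+p-1\ge 0$. Hence every monomial with nonzero coefficient has $x$-exponent $p(q-k)-(n+1)\ge 0$, i.e. $k\le q-\frac{n+1}{p}$, which means $k\le\lfloor q-\frac{n+1}{p}\rfloor$. The same vanishing shows that $c_{k,n+1}=0$ for larger $k$, consistent with the convention $c_{k,n}=0$ for $k>s$.

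The hypothesis $n\le pq$ simply keeps $s\ge 0$, so the sum ranges over a nonempty index set. The case $n=pq$ yields a constant term, and for $n>pq$ everything vanishes.
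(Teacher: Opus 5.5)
Your proof is correct and follows the same route as the paper: induction on $n$, applying Lemma~\ref{lem1} termwise to $\{x^p-\varepsilon y,\,\cdot\,\}$, then reindexing the $x^p$-contribution to read off the recursion. Your additional checks fill in points the paper leaves implicit: the factor $a=p(q-k)-n$ kills the $\varepsilon$-term exactly when it would produce $x^{-1}$, and $c_{k,n+1}=0$ beyond the new bound $\lfloor q-\frac{n+1}{p}\rfloor$.
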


\begin{proof} 
We use the induction on $n$. With $n=0$ we obtain
$c_{q,0} = 1$ and $c_{k,0} = 0$ for $k \neq q$. 
For the induction step we have 
$$
\{ x^p - \varepsilon y, \sum_{k=0}^s c_{k,n} x^{p(q-k) - n} y^k \} = \sum_{k=0}^s c_{k,n} \{ x^p, x^{p(q-k) - n} y^k \} - \sum_{k=0}^s \varepsilon c_{k,n}\{ y, x^{p(q-k) - n} y^k \} =
$$
$$
= \sum_{k=0}^s  k p c_{k,n} x^{p(q-k) - n + p - 1} y^{k-1} + \sum_{k=0}^s \varepsilon (p(q-k) - n) c_{k,n}  x^{p(q-k) - n - 1} y^k =
$$
$$
= \sum_{k=0}^{s'} c_{k,n+1} x^{p(q-k) - (n+1)} y^k, 
$$
where $s'=\lfloor q-\frac{n+1}{p}\rfloor$. This leads to the required relation. 
\end{proof}

If $\varepsilon\ne 0$, we apply the automorphism $(x,y)\mapsto (x,\varepsilon y)$ of the algebra $\KK[x,y]$ and assume that $\varepsilon=1$. 

\begin{lemma} \label{lem3}
We have 
$$
\adj_{x^p - y}^{p(q-1)}(y^q) = c_{0,p(q-1)} x^{p} + c_{1,p(q-1)}	 y \quad \text{and} \quad \adj_{x^p - y}^{pq}(y^q) = c_{0,pq}. 
$$
Moreover, $c_{0,pq} \neq 0$ and $c_{0,p(q-1)} +c_{1,p(q-1)}\ne 0$. \\
\end{lemma}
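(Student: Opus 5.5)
With $\varepsilon=1$ (after the rescaling described above), the plan is to read both formulas off Lemma~\ref{lem2} and to get the non-vanishing from a positivity argument for the coefficients $c_{k,n}$. These coefficients are produced from $c_{q,0}=1$ by the recursion
$$
c_{k,n+1}=(p(q-k)-n)\,c_{k,n}+p(k+1)\,c_{k+1,n}.
$$
They are integers, hence elements of the prime field $\mathbb{Q}\subset\KK$ because the characteristic is zero. It is therefore enough to show that the relevant ones are strictly positive integers.

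\emph{Shape of the two expressions.} Take $n=p(q-1)$ in Lemma~\ref{lem2}; this is allowed since $n\le pq$. Then $s=\lfloor q-(q-1)\rfloor=1$, and the exponent $p(q-k)-n$ of $x$ equals $p$ for $k=0$ and $0$ for $k=1$. This gives $c_{0,p(q-1)}x^p+c_{1,p(q-1)}y$. Now take $n=pq$. Then $s=0$ and the exponent of $x$ is $0$, so only the constant $c_{0,pq}$ remains.

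\emph{Positivity claim.} I will prove by induction on $n\le pq$ that
$$
c_{k,n}>0 \quad\text{for}\quad \max(0,q-n)\le k\le s_n=\lfloor q-n/p\rfloor,
$$
and $c_{k,n}=0$ for $k>s_n$. The vanishing part is already in Lemma~\ref{lem2}. Also, every coefficient $p(q-k)-n$ that occurs with $k\le s_n$ is nonnegative, so all $c_{k,n}\ge 0$ by induction. For the base case $n=0$ we have $c_{q,0}=1$.

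For the induction step, let $k\le s_{n+1}$. Then $k\le q-(n+1)/p$, which gives $p(q-k)-n\ge 1$. Note that $s_{n+1}\le s_n$, so $k\le s_n$ as well. There are two cases.
\begin{itemize}
\item If $k\ge q-n$, then $c_{k,n}>0$ by induction, so the first summand of the recursion is positive.
\item If $k=q-n-1$, then $k+1=q-n$ lies in the range where $c_{k+1,n}>0$. Here one must check that $k+1\le s_n$; this follows from $n\le p(n+1)$, i.e.\ $q-n\le q-n/p$. Then the second summand $p(k+1)c_{k+1,n}$ is positive.
\end{itemize}
In both cases the other summand is $\ge 0$, so $c_{k,n+1}>0$. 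The main care needed is exactly this index bookkeeping: the first factor $p(q-k)-n$ can vanish only at $k=s_n$ with $p\mid n$, and that $k$ drops out of the new range.

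\emph{Conclusion.} Apply the claim with $n=pq$. Since $pq\ge q$, the lower bound is $\max(0,q-pq)=0$, and $s_{pq}=0$, so $c_{0,pq}>0$.

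Now apply it with $n=p(q-1)$, where the admissible range is $\max(0,q-p(q-1))\le k\le 1$.
\begin{itemize}
\item If $q\ge 2$, then $p(q-1)\ge q-1$, so the range starts at $0$ or $1$. In either case $c_{1,p(q-1)}>0$ and $c_{0,p(q-1)}\ge 0$, so the sum is positive.
\item If $q=1$, then $n=0$, and $c_{1,0}+c_{0,0}=1$.
\end{itemize}
Hence $c_{0,p(q-1)}+c_{1,p(q-1)}\neq 0$ in $\KK$, which completes the proof.
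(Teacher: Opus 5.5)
Your proof is correct. For $c_{0,pq}\neq 0$ it is essentially the paper's argument: all $c_{k,n}$ are nonnegative, and the recursion $c_{0,n+1}=(pq-n)c_{0,n}+p\,c_{1,n}$ starting from $c_{0,q}\neq 0$ keeps $c_{0,n}$ positive.

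For $c_{0,p(q-1)}+c_{1,p(q-1)}\neq 0$ you take a genuinely different route.

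\emph{The paper's route.} It tracks no further coefficients. If the sum vanished, then $\adj_{x^p-y}^{p(q-1)}(y^q)=c_{0,p(q-1)}(x^p-y)$. This element commutes with $x^p-y$, so all higher iterates are zero, contradicting $c_{0,pq}\neq 0$.

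\emph{Your route.} You strengthen the induction to a support statement: $c_{k,n}>0$ on the whole window $\max(0,q-n)\le k\le s_n$. From this you read off $c_{1,p(q-1)}>0$ and $c_{0,p(q-1)}\ge 0$.

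\emph{Comparison.} The paper's trick is shorter and needs positivity only along the single chain $c_{0,n}$. It also directly gives what Theorem~\ref{fullalgebra} uses, namely that the iterate is not proportional to $x^p-y$. Your version costs index bookkeeping, which you handle correctly: the factor $p(q-k)-n$ vanishes only at $k=s_n$ with $p\mid n$, and that index drops out of the next window. In return you get more quantitative information, namely the sign of every coefficient in an explicit range. Your induction also spells out why $c_{0,q}\neq 0$, which the paper states without detail: the chain $c_{q,0}\to c_{q-1,1}\to\cdots\to c_{0,q}$ runs through the second summands.

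Two small remarks. First, $q-n\le q-n/p$ is equivalent to $n\le pn$, not to $n\le p(n+1)$. It holds because $p\ge 1$, so nothing breaks. Second, your case split $q=1$ versus $q\ge 2$ is unnecessary, since $q-p(q-1)\le 1$ always.
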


\begin{proof}
The first two equalities follow directly from Lemma~\ref{lem2}. Since $c_{q,0} = 1$ and $c_{k,0} = 0$ for $k \neq q$, we conclude from the recursion formula that
$c_{0,i}=0$ for $i<q$ and $c_{0,q}\ne 0$.  We have $c_{0, n+1} = (pq - n) c_{0,n} + p c_{1,n}$, so $c_{0,i}\ne 0$ for $q\le i\le pq$ as a sum of two non-negative summands
such that at least one of them is positive. So we obtain $c_{0,pq}\ne 0$. 

If $c_{0,p(q-1)} +c_{1,p(q-1)}=0$ then the element $\adj_{x^p - y}^{p(q-1)}(y^q)$ is proportional to $x^p-y$.  This implies 
$$
0=\adj_{x^p - y}^{p(q-1)+1}(y^q)=\ldots=\adj_{x^p - y}^{pq}(y^q)=c_{0,pq},
$$
a contradiction. 
\end{proof}

\begin{lemma} \label{lem4} 
We have
$$
\adj_{x^p}^{q}(y^q) = p^q q! x^{pq - q}. 
$$
\end{lemma}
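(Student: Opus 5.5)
The plan is to prove the slightly stronger statement
$$
\adj_{x^p}^{k}(y^q) = p^k \frac{q!}{(q-k)!}\, x^{k(p-1)} y^{q-k} \qquad \text{for } 0\le k\le q,
$$
by induction on $k$, and then set $k=q$. Alternatively one could take the case $\varepsilon=0$ of Lemma~\ref{lem2}, but a direct induction is cleaner. The claim for $k=0$ is trivial.

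For the induction step, I apply $\adj_{x^p}$ to $x^{k(p-1)}y^{q-k}$ using Lemma~\ref{lem1} with exponents $(p,0)$ and $(k(p-1),\,q-k)$. The determinant is
$$
\begin{vmatrix} p & 0 \\ k(p-1) & q-k \end{vmatrix} = p(q-k).
$$
The resulting monomial is $x^{p+k(p-1)-1}y^{q-k-1}=x^{(k+1)(p-1)}y^{q-(k+1)}$. Multiplying the coefficient $p^k\frac{q!}{(q-k)!}$ by $p(q-k)$ gives $p^{k+1}\frac{q!}{(q-k-1)!}$, which is exactly the claimed formula for $k+1$.

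Taking $k=q$ gives $p^q q!\, x^{q(p-1)} = p^q q!\, x^{pq-q}$, as stated. There is no real obstacle here. The only care needed is tracking the exponents correctly and noting that the determinant factor $p(q-k)$ is nonzero for $k<q$. This nonvanishing shows in particular that the coefficient is nonzero, since $\operatorname{char}\KK=0$.
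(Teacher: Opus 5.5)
Your proof is correct and is essentially the paper's argument: the paper sets $\varepsilon=0$ in the recursion of Lemma~\ref{lem2}, giving $c_{k,n+1}=p(k+1)c_{k+1,n}$, which is exactly your one-step computation via Lemma~\ref{lem1} (each application of $\adj_{x^p}$ lowers the $y$-exponent by one and multiplies the coefficient by $p$ times the current $y$-exponent). Your direct induction simply does this single-monomial bookkeeping explicitly rather than going through the general $\varepsilon$-recursion.
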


\begin{proof}
With $\varepsilon = 0$ the recursion relation in Lemma~\ref{lem2} has the form $c_{k, n+1} = p (k+1) c_{k+1,n}$. Since $c_{q,0} = 1$ and $c_{k,0} = 0$ for $k \neq q$,
we obtain $c_{0,q}=  p^q q!$ and $c_{i,q}=0$ for $i\ne 0$. 
\end{proof}

Let us denote by $\lie(f,g)$ the subalgebra of the Lie algebra $A$ generated by $f,g\in A$. 

\begin{proposition}
\label{prop-gapalgebra}
If $\min(p,q) \geq 2$ and $\max(p,q) \geq 3$ then 
$$
\lie(x^p, y^q) \cap \KK[x] = x^{p} \KK[x^{pq - p - q}]
$$
and
$$
\lie(x^p, y^q) \cap \KK[y] = y^{q} \KK[y^{pq - p - q}].
$$
\end{proposition}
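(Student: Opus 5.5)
The plan rests on a grading. The Lie algebra $\lie(x^p,y^q)$ is spanned as a vector space by iterated (left-normed) brackets of the generators $x^p$ and $y^q$. By Lemma~\ref{lem1}, the bracket of two monomials is a scalar multiple of a monomial, and the exponents of that monomial do not depend on the order of bracketing. An induction on the length of the bracket gives the following: a bracket containing $a$ copies of $x^p$ and $b$ copies of $y^q$, with $n=a+b$ letters in total, is a scalar multiple (possibly zero) of
$$
x^{ap-(n-1)}\,y^{bq-(n-1)}.
$$
Hence $\lie(x^p,y^q)$ is spanned by monomials of this shape with nonzero coefficients. So $\lie(x^p,y^q)\cap\KK[x]$ is spanned by those of them which lie in $\KK[x]$.

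\textbf{Inclusion $\subseteq$.} A monomial of this shape lies in $\KK[x]$ exactly when $bq=a+b-1$, that is, $a=b(q-1)+1$. Substituting, the exponent of $x$ becomes
$$
a(p-1)-b+1=(b(q-1)+1)(p-1)-b+1=p+b(pq-p-q).
$$
Moreover $b\ge 1$, since a bracket built from $x^p$ alone vanishes unless it is $x^p$ itself, which is the case $b=0$. Thus every pure-$x$ element is a combination of $x^{p+bd}$ with $d:=pq-p-q$, which gives $\lie(x^p,y^q)\cap\KK[x]\subseteq x^p\KK[x^d]$. The hypotheses $\min(p,q)\ge2$ and $\max(p,q)\ge3$ guarantee $d\ge1$, so these exponents are distinct and positive. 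In particular no constants appear.

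\textbf{Inclusion $\supseteq$.} I show by induction on $b$ that $x^{p+bd}\in\lie(x^p,y^q)$. The case $b=0$ is the generator $x^p$; the case $b=1$ is Lemma~\ref{lem4}, since $pq-q=p+d$. For the inductive step, let $m=p+bd$ with $x^m$ already in the algebra. By Lemma~\ref{lem1},
$$
\{x^m,y^q\}=mq\,x^{m-1}y^{q-1}.
$$
Now apply $\adj_{x^p}$ exactly $q-1$ times. Each application sends $x^{u}y^{k}$ to $pk\,x^{u+p-1}y^{k-1}$, and the coefficient $pk$ is nonzero while $k\ge1$. After $q-1$ steps we obtain a nonzero multiple of
$$
x^{m-1+(q-1)(p-1)}=x^{m+d}.
$$
This completes the induction. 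The statement for $\KK[y]$ follows by the same argument with the roles of $x,p$ and $y,q$ swapped; the bracket only changes sign under this swap.

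The only delicate point is the first step: justifying that the intersection with $\KK[x]$ is spanned by the pure-$x$ monomials occurring among the brackets. This holds because the algebra is spanned by monomials and $\KK[x]$ is a span of monomials. Beyond that, the argument amounts to checking that no coefficient in the explicit chain vanishes, and this is immediate from Lemma~\ref{lem1}.
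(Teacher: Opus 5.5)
Your proof is correct and is essentially the paper's argument. The inclusion $\supseteq$ comes from Lemma~\ref{lem4} together with the induction step $x^m\mapsto \adj_{x^p}^{q-1}\adj_{y^q}(x^m)$. The inclusion $\subseteq$ comes from tracking the exponents of monomial brackets; your count by the numbers $a,b$ of copies of each generator is the paper's $(p,0)+k(p-1,-1)+l(-1,q-1)$ and $(0,q)+k(p-1,-1)+l(-1,q-1)$ in different bookkeeping. You are more explicit than the paper about why the intersection with $\KK[x]$ is spanned by the pure-$x$ monomials. Your aside that $b\ge 1$ is muddled, since the case $b=0$, $a=1$ (giving $x^p$) is allowed, but it does no harm.
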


\begin{proof}
Let $S = \lie(x^p, y^q)$. 
We first prove the sufficiency of the conditions. 
\smallskip
\begin{enumerate}
\item By Lemma~\ref{lem4},  $S \ni p^q q! x^{pq-q} = \adj_{x^p}^{q}(y^q)$.
\smallskip
\item We show that $x^{k(pq-q)-(k-1)p} \in S \Longrightarrow x^{(k+1)(pq-q)-kp} \in S$ for $k \in \ZZ_{>0}$:
\smallskip
$$
\adj_{x^p}^{q-1}\adj_{y^q} (x^{k(pq-q)-(k-1)p}) = \adj_{x^p}^{q-1}(-q(k(pq-q)-(k-1)p) x^{k(pq-q)-(k-1)p-1} y^{q-1}) =
$$
$$
= -(k(pq-p-q)+p) p^{q-1} q! x^{(k+1)(pq-q)-kp}. 
$$
\end{enumerate}
Since $\min(p,q) \geq 2$, $\max(p,q) \geq 3$, all the coefficients and exponents are positive.
We conclude that $S$ contains the monomials $x^{k(pq-p-q) + p}$ for any non-negative integer $k$. 
We can also interchange the roles of $x^p$ and $y^q$.

For the necessity of the conditions, observe that all possible Lie combinations are obtained through a combination of the adjoint actions $\adj_{x^p}$ and $\adj_{y^q}$ which map monomials to monomials. Applying the operators $\adj_{x^p}$ and $\adj_{y^q}$ to $x^p$ and $y^q$, we obtains monomials with exponents  
$$
(p,0) + k (p-1,-1) + l (-1,q-1) \,:\, k, l \in \ZZ_{\ge 0} 
$$
and
$$
(0,q) +  k (p-1,-1) + l (-1,q-1) \,:\, k, l \in \ZZ_{\ge 0}, 
$$
respectively.  For $(m,0)$, this yields either $m = p + l (pq-p-q)$ for the first variant or $m = p + (l + 1)(pq-q-p)$ for the second variant. 
Similarly, for $(0,n)$ we have ${n = q + (k+1)(pq-p-q)}$ or $n = q + k (pq-p-q)$. 
\end{proof}

\begin{corollary}
If $p = 3$ and $q = 2$ then
$$
\KK[x,y] = \langle 1, x, y, xy, x^2 \rangle\oplus \lie(x^3, y^2). 
$$
\end{corollary}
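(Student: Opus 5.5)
The plan is to grade $\KK[x,y]$ by the weight $w(x^ay^b)=2a+3b$. Then I would show that $\lie(x^3,y^2)$ is exactly the span of the monomials of weight at least $6$. The monomials of weight less than $6$ are precisely $1,x,y,x^2,xy$, of weights $0,2,3,4,5$, which gives the claimed direct sum decomposition.

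\emph{Upper bound.} By Lemma~\ref{lem1}, the bracket of two monomials is a scalar multiple of a monomial. The weights satisfy
$$
w\bigl(x^{a+c-1}y^{b+d-1}\bigr)=w(x^ay^b)+w(x^cy^d)-5 .
$$
Hence $S=\lie(x^3,y^2)$ is spanned by iterated brackets of the generators, and each such bracket is a multiple of a monomial. The generators $x^3$ and $y^2$ both have weight $6$. By induction on the number $n$ of generators occurring in an iterated bracket, such a bracket has weight $6n-5(n-1)=n+5\ge 6$. So every element of $S$ is a combination of monomials of weight $\ge 6$. In particular $S\cap\langle 1,x,y,xy,x^2\rangle=0$.

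\emph{Lower bound.} I would show by induction on $w\ge 6$ that every monomial of weight $w$ lies in $S$. For $w=6$ the only monomials are $x^3$ and $y^2$, which are the generators. For $w\ge 7$, let $x^ay^b$ have weight $w$. By Lemma~\ref{lem1},
$$
\{y^2,x^{a+1}y^{b-1}\}=-2(a+1)\,x^ay^b \qquad\text{and}\qquad \{x^3,x^{a-2}y\}=3\,x^{a}.
$$
If $b\ge 1$, I use the first identity. Here $x^{a+1}y^{b-1}$ has weight $w-1\ge 6$, so it lies in $S$ by induction, and the coefficient $-2(a+1)$ is nonzero. If $b=0$, then $2a=w\ge 7$ forces $a\ge 4$. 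I then use the second identity with $x^{a-2}y$, which has weight $2a-1=w-1\ge 6$ and lies in $S$ by induction. This completes the induction.

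Combining the two bounds, $S$ equals the span of all monomials of weight $\ge 6$. Its complement spanned by monomials is $\langle 1,x,y,xy,x^2\rangle$, which proves the corollary. The only delicate point is the case split for the inductive step: the preimage must have weight at least $6$ and the bracket coefficient must be nonzero. Both are checked above, and the weight choice $(2,3)$ makes the bookkeeping uniform.
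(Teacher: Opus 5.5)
Your proof is correct, and it takes a somewhat different route from the paper's two-line argument.

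The paper gets $x^3\KK[x]+y^2\KK[y]\subseteq\lie(x^3,y^2)$ as the special case $pq-p-q=1$ of Proposition~\ref{prop-gapalgebra}, which is built from the iterated operators $\adj_{x^p}^{q-1}\adj_{y^q}$. It then simply cites Lemma~\ref{lem1} for the mixed monomials. The fact that $1,x,y,xy,x^2$ do not lie in $\lie(x^3,y^2)$ is left implicit there. It sits in the exponent bookkeeping $(p,0)+k(p-1,-1)+l(-1,q-1)$ from the necessity part of that proposition, which is only written out for pure powers.

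Your weight $w(x^ay^b)=2a+3b$ is exactly the invariant $6+k+l$ of that bookkeeping, for both families of exponents. Making it explicit gives you the trivial intersection immediately. It also gives a self-contained one-step induction for the reverse inclusion, without invoking Proposition~\ref{prop-gapalgebra}. In addition, you get a cleaner structural statement. With $\deg(x^ay^b)=2a+3b-5$, the algebra $(\KK[x,y],\{\cdot,\cdot\})$ becomes a $\ZZ$-graded Lie algebra. Your argument shows that $\lie(x^3,y^2)$ is precisely its positive part, and that this part is generated by the degree-one component $\langle x^3,y^2\rangle$.

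The paper's route, in return, presents the corollary as the gap-free instance of the general gap phenomenon for $\lie(x^p,y^q)$. Its proof, however, is considerably less explicit about the directness of the sum than yours.
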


\begin{proof}
We have $pq - p - q = 1$ and Proposition~\ref{prop-gapalgebra} implies $x^3 \KK[x] + y^2 \KK[y] \subseteq\lie(x^3, y^2)$. 
The inclusion of mixed powers $x^ay^b$ follows from Lemma~\ref{lem1}.
\end{proof}

Since $pq-p-q=1$ is equivalent to $(p-1)(q-1)=2$ and
$$
\lie(x,y^q)=\langle 1,x,y,y^2,\ldots,y^q\rangle, \quad \lie(x^2, y^2) = \langle xy, x^2,y^2 \rangle, 
$$
we obtain

\begin{corollary}
The subalgebra $\lie(x^p,y^q)$ has finite codimension in $\KK[x,y]$ if and only if $(p,q) = (2,3)$ or $(3,2)$.  
\end{corollary}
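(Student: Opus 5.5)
The plan is to treat the two directions separately. For the ``if'' direction, the case $(p,q)=(3,2)$ is exactly the preceding corollary. The codimension of $\lie(x^3,y^2)$ is then $5$, namely $\dim\langle 1,x,y,xy,x^2\rangle$. The case $(2,3)$ follows by applying the automorphism $(x,y)\mapsto(y,x)$. This map sends the bracket $\{\cdot,\cdot\}$ to $-\{\cdot,\cdot\}$, so it maps Lie subalgebras to Lie subalgebras and takes $\lie(x^2,y^3)$ to $\lie(y^2,x^3)=\lie(x^3,y^2)$.

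For the ``only if'' direction I will go through the remaining pairs $(p,q)$ of positive integers. Throughout I use the fact, noted in the proof of Proposition~\ref{prop-gapalgebra}, that $\lie(x^p,y^q)$ is spanned by monomials. This holds because $\adj_{x^p}$ and $\adj_{y^q}$ send monomials to scalar multiples of monomials (Lemma~\ref{lem1}). Consequently, infinite codimension follows as soon as infinitely many monomials are missing from the subalgebra.

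\emph{Case $\min(p,q)\ge 2$ and $\max(p,q)\ge 3$.} Here Proposition~\ref{prop-gapalgebra} gives $\lie(x^p,y^q)\cap\KK[x]=x^p\KK[x^{d}]$ with $d=pq-p-q$. Since $d=(p-1)(q-1)-1$, the condition $(p,q)\notin\{(2,3),(3,2)\}$ forces $(p-1)(q-1)\ge 3$, hence $d\ge 2$. So $x^{p+1+kd}$ is missing for every $k\ge0$, giving infinitely many missing monomials and infinite codimension.

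\emph{Case $p=q=2$.} The algebra $\langle xy,x^2,y^2\rangle$ is closed under the bracket, as one checks directly with Lemma~\ref{lem1} ($\mathfrak{sl}_2$). It is finite-dimensional, so its codimension is infinite.

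\emph{Case $\min(p,q)=1$.} By symmetry take $p=1$. Then $\adj_x(y^k)=ky^{k-1}$, $\{x,1\}=0$, and $\{y^a,y^b\}=0$, so the span $\langle 1,x,y,\dots,y^q\rangle$ is closed under the bracket. It is therefore all of $\lie(x,y^q)$ and is finite-dimensional, which again gives infinite codimension. The only real check here is confirming that this span is closed, which is immediate from Lemma~\ref{lem1}.

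The main obstacle is the first case: making sure the gap $d\ge2$ genuinely produces missing monomials. This rests entirely on the exact description of the intersection with $\KK[x]$ in Proposition~\ref{prop-gapalgebra}, together with the monomial-span remark. With those two facts in hand, the argument reduces to the arithmetic identity $pq-p-q=(p-1)(q-1)-1$.
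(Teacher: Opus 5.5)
Your proposal is correct and follows the paper's route. The paper handles $\min(p,q)\ge 2$, $\max(p,q)\ge 3$ through Proposition~\ref{prop-gapalgebra} and the equivalence $pq-p-q=1 \iff (p-1)(q-1)=2$, and disposes of $\min(p,q)=1$ and $p=q=2$ via the finite-dimensional algebras $\langle 1,x,y,\dots,y^q\rangle$ and $\langle xy,x^2,y^2\rangle$; you add only details the paper leaves implicit, namely the monomial-span argument for counting codimension and the swap $(x,y)\mapsto(y,x)$ for the $(2,3)$ case.
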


The following result is a generalization of Proposition~\ref{pfirst}. 

\begin{theorem} \label{fullalgebra}
If $\min(p,q) \geq 2$ and $\max(p,q) \geq 3$ then
$$
\lie(x^p - y, y^q) = \KK[x,y].
$$
\end{theorem}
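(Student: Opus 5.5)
The plan is to extract the two "pure" generators $x^p$ and $y$ from $S:=\lie(x^p-y,\,y^q)$ using Lemma~\ref{lem3}. After that, Proposition~\ref{prop-gapalgebra} supplies arbitrarily high powers of $x$ and of $y$, and bracketing with $y$ and $x$ brings the exponents down.

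First, by Lemma~\ref{lem3} (with $\varepsilon=1$), iterating $\adj_{x^p-y}$ on $y^q$ gives
$$
u:=\adj_{x^p - y}^{p(q-1)}(y^q) = c_{0,p(q-1)} x^{p} + c_{1,p(q-1)}\, y \in S,
$$
with $c_{0,p(q-1)}+c_{1,p(q-1)}\neq 0$. The vectors $u$ and $x^p-y$ in $\langle x^p,y\rangle$ are linearly independent exactly when $c_{0,p(q-1)}\cdot(-1)-c_{1,p(q-1)}\cdot 1\neq 0$, which is this condition. Hence $x^p\in S$ and $y\in S$. Lemma~\ref{lem3} also gives $1\in S$, but this is not needed.

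Since $x^p,y^q\in S$, we have $\lie(x^p,y^q)\subseteq S$. By Proposition~\ref{prop-gapalgebra}, which uses the hypotheses $\min(p,q)\ge 2$ and $\max(p,q)\ge 3$, $S$ contains $x^{p+k(pq-p-q)}$ and $y^{q+k(pq-p-q)}$ for all $k\ge 0$. Moreover $pq-p-q>0$, so these exponents are unbounded. Now $\{x^m,y\}=mx^{m-1}$ by Lemma~\ref{lem1}, and $y\in S$. Starting from $x^{p+k(pq-p-q)}$ and bracketing repeatedly with $y$, we obtain every $x^m$ with $m\le p+k(pq-p-q)$, so $x^m\in S$ for all $m\ge 0$. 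In particular $x\in S$. Then $\{x,y^l\}=ly^{l-1}$ applied to the arbitrarily high powers $y^{q+k(pq-p-q)}$ gives all $y^s\in S$.

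Finally, $\{x^m,y^s\}=ms\,x^{m-1}y^{s-1}$ with $m,s\ge1$ produces every mixed monomial $x^ay^b$. Hence $S=\KK[x,y]$.

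The only genuinely delicate step is the first one: isolating $x^p$ and $y$ from the inhomogeneous generator $x^p-y$. That step rests entirely on the non-degeneracy $c_{0,p(q-1)}+c_{1,p(q-1)}\neq 0$ from Lemma~\ref{lem3}. The rest is bookkeeping with Lemma~\ref{lem1} and Proposition~\ref{prop-gapalgebra}. Over a field of characteristic zero, all the integer coefficients that appear ($m$, $l$, $ms$) are nonzero, so no division issues arise.
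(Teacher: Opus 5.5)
Your proof is correct. Its first step is exactly the paper's: you extract $x^p$ and $y$ from $x^p-y$ and $\adj_{x^p-y}^{p(q-1)}(y^q)$ using $c_{0,p(q-1)}+c_{1,p(q-1)}\neq 0$.

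The finish differs, because the paper does not use Proposition~\ref{prop-gapalgebra} here. It brackets $x^p$ repeatedly with $y$ to get $x^{p-1},\dots,x$, and then brackets $y^q$ with $x$ to get $y^{q-1},\dots,y$. Depending on whether $q\ge 3$ or $p\ge 3$, it then notes that either $x^2+2y,\,y^3$ or $y^2+2x,\,x^3$ lies in $S$. Finally it appeals to the proof of Proposition~\ref{pfirst}, where the unbounded powers come from the degree-raising brackets $\{\{x^2,y^l\},y^l\}$ and $\{\{\{y^3,x^k\},x^k\},x^k\}$.

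You instead take the unbounded powers from the sufficiency half of Proposition~\ref{prop-gapalgebra}. Your version is uniform: it needs no case split and no $x\leftrightarrow y$ swap, and it uses the hypothesis exactly in the form $pq-p-q>0$. The paper's version presents the theorem as a direct corollary of Proposition~\ref{pfirst}, in line with its description as a generalization of it. It also needs only those two short brackets rather than the longer computation behind Proposition~\ref{prop-gapalgebra}.
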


\begin{proof}
Since we have
\[
\adj_{x^p - y}^{p(q-1)}(y^q) = c_{0,p(q-1)} x^{p} + c_{1,p(q-1)} y
\]
and $c_{0,p(q-1)} + c_{1,p(q-1)}\ne 0$ (Lemma~\ref{lem3}), we obtain $x^p, y \in \lie(x^p - y, y^q)$. 
Lemma~\ref{lem1} implies
$$
\adj_{y}^n(x^p) = (-1)^n p \cdots (p-n+1) x^{p-n} \in \lie(x^p - y, y^q) 
$$
and
$$
\adj_{x}^n(y^q) = q \cdots (q-n+1) y^{q-n} \in \lie(x^p - y, y^q). 
$$
We conclude that the elements $x^p, x^{p-1},\ldots, x$ and $y^q, y^{q-1},\ldots, y$ are contained in ${\lie(x^p -  y, y^q)}$. 
Since $\min(p,q) \geq 2$ and $\max(p,q) \geq 3$, either the elements $x^2+2y, y^3$ or the elements $y^2+2x, x^3$ are contained in  
$\lie(x^p - y, y^q)$.  It is shown in the proof of Proposition~\ref{pfirst} that every such pair generates the algebra $A$. 
\end{proof}

Finally, let us give one more result on Lie algebras. Let $\Delta=x\frac{\partial}{\partial x} + y\frac{\partial}{\partial y}$. Consider the Lie algebra $\widehat{L}$ 
of polynomial vector fields on $\KK^2$ with constant divergence.  By \cite[Proposition~15.7.2]{FK}, the Lie algebra $\widehat{L}$  coincides with the tangent
algebra of the group of polynomial automorphisms $\Aut(\KK^2)$. Clearly, we have
$$
\widehat{L} = \langle \Delta \rangle \oplus L. 
$$
Let us show that the Lie algebra $\widehat{L}$ is 2-generated, which is a natural extension of Proposition~\ref{pfirst}. 

\begin{proposition} \label{psecond}
The Lie algebra $\widehat{L}$ is generated by two derivations 
$$
\widehat{\partial_1}= \Delta + 3y^2\frac{\partial}{\partial x} \quad \text{and} \quad \partial_2= \frac{\partial}{\partial x}-x\frac{\partial}{\partial y}.
$$
\end{proposition}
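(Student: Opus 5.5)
The plan is to work inside the Lie algebra $M$ generated by $D:=\widehat{\partial_1}$ and $\partial_2$. We show that $M$ contains $V_{y^3}=3y^2\frac{\partial}{\partial x}$. Then $\Delta = D - V_{y^3}\in M$, and also $\partial_1=\frac13 V_{y^3}\in M$ and $\partial_2\in M$. Proposition~\ref{pfirst} then gives $L\subseteq M$, hence $M=\langle\Delta\rangle\oplus L=\widehat{L}$.

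Two structural facts will be used throughout.
\begin{enumerate}
\item Since elements of $\widehat{L}$ have constant divergence, every bracket has divergence $0$, so $[\widehat{L},\widehat{L}]\subseteq L$. Thus all commutators we produce are of the form $V_f$, and we can compute them in $A$ via the Poisson bracket and Lemma~\ref{lem1}.
\item For $f$ homogeneous of degree $d$ we have $[\Delta,V_f]=(d-2)V_f$. Hence $[D,V_f]=(d-2)V_f+V_{\{y^3,f\}}$, and $\adj_{V_{y^3}}$ raises degree by one.
\end{enumerate}

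First compute $[\partial_2,D]$. Writing $\partial_2=\frac12V_{x^2+2y}$, we get $[\partial_2,\Delta]=\frac{\partial}{\partial x}=V_y$ and $[\partial_2,V_{y^3}]=\frac12V_{\{x^2+2y,y^3\}}=3V_{xy^2}$. So $E:=[\partial_2,D]=V_{y+3xy^2}\in M$. Next, bracket $E$ repeatedly with $\partial_2$, i.e.\ apply $\adj_{x^2+2y}$ as in the computation of Proposition~\ref{pfirst}. The degree $3$ part descends through $x^2y$, $y^2$, $x^3$, $xy$, etc. After a few steps we expect to reach elements of degree $\le 1$, in particular $V_x$ and $V_y$ (i.e.\ constant vector fields). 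From there, $\adj_{V_x}$ and $\adj_{V_y}$ (which act as $\partial/\partial y$ and $-\partial/\partial x$ on Hamiltonians) peel off lower homogeneous components. The goal of this stage is to obtain, inside $M\cap L$, the Hamiltonians $x,y,x^2,y,xy,y^2$ separately, i.e.\ all of degree $\le 2$.

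Once all Hamiltonians of degree $\le2$ are in $M$ (up to the center), use $D$ as a degree-raising operator. Since $[\Delta,V_{x^2}]=0$, we get $[D,V_{x^2}]=V_{\{y^3,x^2\}}=-6V_{xy^2}$. Then $\{y^2,xy^2\}=-2y^3$ gives $V_{y^3}\in M$, which completes the argument as explained at the start.

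The main obstacle is the isolation step in the second paragraph. Every generator is inhomogeneous: $D$ mixes $\Delta$ with $V_{y^3}$, and $\partial_2$ mixes degrees $1$ and $2$. So one must check by explicit computation, much as in the proof of Proposition~\ref{pfirst}, that repeated brackets with $\partial_2$ yield enough non-proportional elements to split off $V_x$, $V_y$ and then the pure quadratics. If $\partial_2$ alone does not suffice, the fallback is to also use $[D,\cdot\,]$ on the low-degree elements obtained so far. On a homogeneous element of degree $d$ this acts as multiplication by $d-2$ plus a degree-raising term. Taking combinations of $[D,V_f]$ and $V_f$ therefore lets us separate homogeneous components by their distinct $\adj_\Delta$-eigenvalues.
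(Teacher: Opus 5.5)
Your reduction is the same as the paper's: obtain $V_{y^3}$, hence $\Delta$ and $\partial_1$, and then apply Proposition~\ref{pfirst}. The paper carries this out in $\langle\delta\rangle\rightthreetimes A$ with Hamiltonian generators $x^2+2y$ and $\delta+y^3$. It iterates $\adj_{x^2+2y}$ on their bracket to get $1,x,x^2,y$, and then produces $xy^2$, $y^2$, $y^3$. The gap in your proposal is that the step you yourself call the main obstacle is never carried out. Nothing in it fails, but ``after a few steps we expect'' is the whole content of the proof. Your fallback would also not work as stated. For inhomogeneous $f$, the term $V_{\{y^3,f\}}$ in $[D,V_f]$ reintroduces mixed degrees, so linear combinations of $[D,V_f]$ and $V_f$ do not isolate homogeneous components.

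In fact $\partial_2$ alone suffices. One caveat on signs first. With the conventions of Section~2 one actually has $[V_f,V_h]=-V_{\{f,h\}}$; for example $[V_{x^2},V_y]=2\frac{\partial}{\partial y}=-V_{2x}$. Hence $E=[\partial_2,D]=V_{y-3xy^2}$ rather than $V_{y+3xy^2}$, but this sign is harmless. Since $\adj_{\partial_2}V_g=-\frac12V_{\{x^2+2y,\,g\}}$, iterating on $E$ gives, up to nonzero factors and central constants, the Hamiltonians
\[
y-3xy^2,\qquad -x+6x^2y-3y^2,\qquad x^3-3xy,\qquad y-2x^2,\qquad x .
\]
Because $y-2x^2$ and $x^2+2y$ are linearly independent, this puts $V_x$, $V_y$ and $V_{x^2}$ in $M$. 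You do not need $xy$ or $y^2$ from this stage. Indeed, $[D,V_{x^2}]=6V_{xy^2}$, then $[V_{xy^2},V_y]=-V_{y^2}$, and $[V_{y^2},V_{xy^2}]=2V_{y^3}$. So $V_{y^3}\in M$, and your opening argument finishes the proof. With these lines inserted, your argument coincides with the paper's, up to the sign convention.
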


\begin{proof}
Note that $[\Delta, V_{x^ay^b}]=(a+b-2)V_{x^ay^b}$.  It is easy to see that the Lie algebra $A=(\KK[x,y], \{\cdot, \cdot\})$ is
$\ZZ$-graded:
$$
A=\oplus_{i\in\ZZ} A_i, \quad \text{where} \quad  A_i=\langle x^ay^b \, ;  \, a+b-2=i\rangle. 
$$
Define the linear operator $\delta$ on $A$ given by $\delta(x^ay^b)=(a+b-2)x^ay^b$. We may consider the semidirect product
of Lie algebras:
$$
\widehat{A}:=\langle \delta\rangle \rightthreetimes A, \quad [\delta, x^ay^b]=(a+b-2)x^ay^b. 
$$
Then we have $[\Delta, V_f]=V_{\{\delta,f\}}$ for any polynomial $f\in\KK[x,y]$. Again the projection 
$$
\widehat{A}\to\widehat{L}, \quad \alpha\delta+f\mapsto \alpha\Delta+V_f, \quad \alpha\in\KK
$$
establishes an isomorphism $\widehat{A}/\widehat{I}\cong\widehat{L}$, where $\widehat{I}=0+\KK$ is the ideal of constants. So it suffices to prove that the elements $x^2+2y$ and $\delta+y^3$ generate the algebra
$\widehat{A}$. We have 
$$
\{x^2+2y,\delta+y^3\}=2y+6xy^2, \ \{x^2+2y, y+3xy^2\}=2x+12x^2y-6y^2, 
$$
$$
\{x^2+2y, x+ 6x^2y - 3y^2\}=-2+12x^3-36xy, \  \{x^2+2y, x^3-3xy\}=6y-12x^2, 
$$
$$
\ \ \{x^2+2y, y-2x^2\}=10x, \ \ \{x^2+2y, x\}=-2.
$$
Since the polynomials $x^2+2y$ and $y-2x^2$ are not proportional, we conclude 
that $1,x,x^2,y$ lie in the Lie algebra $\widehat{A}_0$ generated by $x^2+2y$ and $\delta+y^3$. Further, we have
$$
\{\{x^2,\delta+y^3\},y\}=\{6xy^2,y\}=6y^2 \quad \text{and} \quad \{xy^2,y^2\}=2y^3.
$$
We conclude that the elements $\delta, x^2+2y, y^3$ are in $\widehat{A}_0$, and the claim follows from Proposition~\ref{pfirst}. 
\end{proof}

\begin{remark}
While the vector field $\widehat{\partial_1}=(x+3y^2)\frac{\partial}{\partial x}+y\frac{\partial}{\partial y}$  is not the velocity field of any one-parameter polynomial subgroup of automorphisms of $\KK^2$, over the field of complex numbers it is the velocity field of one-parameter subgroup of holomorphic automorphisms of $\CC^2$,
namely,
$$
(x,y)\mapsto ((x-3y^2) e^t+3y^2 e^{2t}, y e^{t}). 
$$
\end{remark}

\begin{remark}
Developing~\cite[Theorem~11]{An-1}, it is shown in~\cite{Be} that the Lie algebra of all polynomial vector fields on $\KK^n$ is 2-generated. Three generators of the Lie algebra of all volume-preserving polynomial vector fields or, equivalently, of the Lie algebra of all polynomial vector fields with divergence zero on $\KK^n$ with $n\ge 2$ are given in~\cite[Theorem~14]{An-1}. We do not know whether the Lie algebra of all polynomial vector fields with zero divergence and the Lie algebra of all polynomial vector fields with constant divergence on $\KK^n$ are 2-generated for $n\ge 3$. 
\end{remark} 

\section{Infinite transitivity and root subgroups}
\label{sec3}

In this section we assume that the ground field $\KK$ is an algebraically closed field of characteristic zero. Let $X$ be an affine algebraic variety of dimension at least~2 and $T$
be an algebraic torus with the character lattice $M$. Assume that we have an effective action $T\times X\to X$. Let $H_1,\ldots, H_k$ be $\GG_a$-subgroups in $\Aut(X)$ normalized by $T$.
Such subgroups are called root subgroups. Let $e_1,\ldots,e_k\in M$ be characters of $T$ which correspond to the action of $T$ on $H_1,\ldots,H_k$ by conjugation. This means that
$$
tH_i(a)t^{-1}=H_i(e_i(t)a) \quad \forall t\in T, a\in\KK. 
$$
Consider the subgroup $G=\langle H_1,\ldots,H_k\rangle$ in $\Aut(X)$.  Denote by $X^m$ the direct product $X\times\ldots\times X$ ($m$ times). 

\begin{definition}
An action of a group $G$ on an algebraic variety $X$ is said to be \emph{generically infinitely transitive} if for any positive integer $m$ the diagonal action of $G$ on $X^m$ has an open orbit. 
\end{definition}

Assume that the group $G$ has an open orbit $\OOO$ on $X$. We are interested when $G$ acts on $\OOO$ infinitely transitively or, weaker, when the action of $G$ on $X$
is generically infinitely transitive. 

\smallskip

The following result is a conceptual generalization of~\cite[Remark~5.7]{AKZ}. Also it proves one implication in~\cite[Conjecture~2]{CT}. 

\begin{proposition} \label{prop-a}
Assume that the group $G$ acts on $X$ with an open orbit $\OOO$ and the $G$-action on $\OOO$ is 2-transitive. Then the characters $e_1,\ldots,e_k$ generate the lattice $M$.
\end{proposition}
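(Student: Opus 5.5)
The plan is to argue by contradiction, using the centralizer of $G$ inside $T$. Let $M'\subseteq M$ be the sublattice generated by $e_1,\ldots,e_k$, and suppose $M'\ne M$. Consider the subgroup
$$
T'=\{t\in T \,:\, e_i(t)=1 \ \text{for all } i=1,\ldots,k\}=\bigcap_{\chi\in M'}\Ker\chi .
$$
The first step is to check that $T'$ is nontrivial. By the standard duality between diagonalizable groups and finitely generated abelian groups, $T'$ is a diagonalizable group with character group $M/M'$. Concretely, since $\KK$ is algebraically closed of characteristic zero, the group $\KK^\times$ is divisible and contains roots of unity of every order. Hence any nonzero finitely generated abelian group $M/M'$ admits a nontrivial homomorphism $M/M'\to\KK^\times$. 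Composing with $M\to M/M'$ gives a homomorphism $M\to\KK^\times$ that is trivial on $M'$ but not on $M$. Such a homomorphism is evaluation at a point $t\in T$, so $t\in T'$ and $t\ne 1$. This holds whether $M'$ has lower rank or full rank and finite index.

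Next, I show that $T'$ centralizes $G$. For $t\in T'$ the defining relation gives $tH_i(a)t^{-1}=H_i(e_i(t)a)=H_i(a)$. So $t$ commutes with every $H_i$, and therefore with every element of $G=\langle H_1,\ldots,H_k\rangle$. In particular, $t$ maps $G$-orbits to $G$-orbits: $t(Gx)=G(tx)$. Since $\OOO$ is the unique open $G$-orbit and $t$ is an automorphism of $X$, we get $t(\OOO)=\OOO$.

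Now fix $t\in T'$ with $t\ne 1$. Because the $T$-action is effective, $t$ acts nontrivially on $X$, so its fixed point set $X^t$ is a proper closed subset. Since $\OOO$ is open and dense in $X$ (I would restrict to the relevant irreducible component if $X$ is reducible), there is a point $x\in\OOO$ with $x':=tx\ne x$, and $x'\in\OOO$. Since $\dim X\ge 2$, the orbit $\OOO$ contains a point $z\notin\{x,x'\}$. By 2-transitivity there is $g\in G$ with $g(x)=x$ and $g(x')=z$. But then
$$
z=g(x')=g(tx)=t\,g(x)=tx=x',
$$
which is a contradiction. Hence $M'=M$.

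The only step needing care is the nontriviality of $T'$ when $M'$ has finite index, since then $T'$ is a finite group. This is where the algebraic closedness of $\KK$ and characteristic zero are used: they supply enough roots of unity to separate the torsion in $M/M'$. Everything else is formal.
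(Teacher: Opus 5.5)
Your proposal is correct and follows the paper's route. A nontrivial $t_0\in T$ in the common kernel of the $e_i$ centralizes $G$ and preserves $\OOO$, and then the pair $(x,t_0x)$ cannot be sent to $(x,z)$ with $z\ne t_0x$. You also spell out two points the paper leaves implicit: the existence of $t_0\ne 1$ via $T=\mathrm{Hom}(M,\KK^\times)$ and divisibility of $\KK^\times$, and the fact that $t_0$ moves some point of $\OOO$, by effectiveness and density of $\OOO$.
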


\begin{proof}
Assume that $e_1,\ldots,e_k$ generate a proper sublattice in $M$. Then there is a non-unit element $t_0\in T$ with 
$$
e_1(t_0)=\ldots=e_k(t_0)=1. 
$$
The element $t_0$ commutes with all elements in each $H_i$, and so with all elements in $G$. Thus $t_0$ permutes $G$-orbits on $X$. In particular, the automorphism $t_0$ preserves the open orbit $\OOO$. Now the assertion follows from the next set-theoretic lemma.
\end{proof}

\begin{lemma}
Assume that a group $G$ acts on a set $\OOO$ transitively and $|\OOO|\ge 3$. If there is a non-identical bijection $t_0\colon \OOO\to\OOO$, which commutes with any element in $G$, then the $G$-action on $\OOO$ is not 2-transitive. 
\end{lemma}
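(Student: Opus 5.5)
The plan is to argue by contradiction: assume the $G$-action on $\OOO$ is 2-transitive and derive that $t_0$ is the identity. The starting point is that a bijection commuting with $G$ maps stabilizers to stabilizers. If $g\cdot a=a$, then
$$
g\cdot t_0(a)=t_0(g\cdot a)=t_0(a),
$$
so $\mathrm{Stab}_G(a)\subseteq \mathrm{Stab}_G(t_0(a))$ for every $a\in\OOO$.

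Since $t_0$ is not the identity, we can fix a point $a\in\OOO$ with $b:=t_0(a)\neq a$. Because $|\OOO|\ge 3$, we can also choose a third point $c\in\OOO$ with $c\ne a$ and $c\ne b$.

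By 2-transitivity there is $g\in G$ sending the ordered pair $(a,b)$ to the ordered pair $(a,c)$; this is allowed because both pairs consist of distinct points. This $g$ fixes $a$, so by the inclusion above it also fixes $t_0(a)=b$. On the other hand $g\cdot b=c\neq b$, which is a contradiction. Hence the $G$-action on $\OOO$ is not 2-transitive.

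There is no real obstacle here. The only care needed is to choose $c$ distinct from both $a$ and $b$, which is exactly where the hypothesis $|\OOO|\ge 3$ enters, and to apply 2-transitivity to ordered pairs of distinct points. Transitivity of $G$ is not even needed for this argument, although it is part of the setup.
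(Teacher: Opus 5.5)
Your proof is correct and is essentially the paper's argument. The paper also picks $x$ with $t_0x\neq x$ and uses $(gx,gt_0x)=(gx,t_0gx)$ to conclude that no $g\in G$ sends $(x,t_0x)$ to $(x,y)$ with $x\neq y\neq t_0x$, which is exactly your stabilizer observation applied to a third point $c$.
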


\begin{proof}
Take a point $x\in\OOO$ such that $t_0x\ne x$ and consider the pair $(x,t_0x)$. We have 
$$
(gx,gt_0x)=(gx,t_0gx) \quad \forall g\in G.
$$
This shows that an element of $G$ can not send the pair $(x,t_0x)$ to a pair $(x,y)$ with ${x\ne y\ne t_0x}$. 
\end{proof}

Now we concentrate on the following particular situation. Fix non-negative integers $r,s$ and consider two one-parameter subgroups of automorphisms of the affine plane $\KK^2$:
$$
L_r\colon (x+\alpha y^r,y)\quad \alpha\in\KK \quad \text{and}\quad R_s\colon (x,y+\beta x^s), \quad \beta\in\KK.
$$
Denote by $G_{r,s}$ the subgroup of the group $\text{Aut}(\KK^2)$ generated by $L_r$ and $R_s$. 

Notice that the torus $T=(\KK^{\times})^2$ acts on $\KK^2$ as $(t_1,t_2)\circ (x,y)=(t_1x,t_2y)$. 
It follows from Proposition~\ref{prop-a} that the subgroup $G_{r,s}$ can act infinitely transitively on its open orbit only if the vectors $e_1=(-1,r)$ and $e_2=(s,-1)$ generate the lattice $M=\ZZ^2$ or, equivalently, $rs=0$ or $rs=2$. In the first case the group $G_{r,s}$ acts along one coordinate by parallel translations, so the action preserves the difference of the corresponding coordinates, and the action is not 2-transitive~\footnote{This gives a counterexample to Conjecture~1 in~\cite{CT}.}. 

It is asked in~\cite[Section~5.1]{AKZ} whether $rs=2$ implies that the action of $G_{r,s}$ on $\KK^2\setminus\{0\}$ is infinitely transitive. It is proved in~\cite[Corollary~21]{LPS} and in a more general form in~\cite[Theorem~1]{CT} that this action is indeed infinitely transitive. So we have the following result.

\begin{theorem} \label{teor2}
The group $G_{r,s}$ acts on an open subset $\OOO$ in $\KK^2$ infinitely transitively if and only if $rs=2$. In the later case, we have $\OOO=\KK^2\setminus\{0\}$. 
\end{theorem}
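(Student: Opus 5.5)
Theorem~\ref{teor2} splits into a necessity part and a sufficiency part; I take them in that order.

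\emph{Necessity.} Assume $G_{r,s}$ acts infinitely transitively on an open subset $\OOO$. The action is then in particular 2-transitive on $\OOO$, and $\OOO$ is an open orbit. The torus $T=(\KK^\times)^2$ normalizes $L_r$ and $R_s$, acting on them by the characters $e_1=(-1,r)$ and $e_2=(s,-1)$. By Proposition~\ref{prop-a}, these two characters must generate $\ZZ^2$. This happens exactly when
$$
\left|\det\begin{pmatrix}-1 & r\\ s & -1\end{pmatrix}\right| = |1-rs| = 1,
$$
that is, when $rs=0$ or $rs=2$. It remains to rule out $rs=0$. If, say, $r=0$, then $L_0$ consists of the translations $x\mapsto x+\alpha$, and $R_s$ acts by $y\mapsto y+\beta x^s$.

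\begin{itemize}
\item For $s=0$, both subgroups are translations. The group is then $\KK^2$ acting simply transitively, so the difference of two points is invariant and 2-transitivity fails.
\item For $s\ge 1$, the group preserves the fibration by lines $x=\mathrm{const}$. For any pair of points, whether their $x$-coordinates coincide is preserved. Indeed, $L_0$ shifts both $x$-coordinates by the same amount $\alpha$, and $R_s$ does not change $x$ at all. Hence two points on one vertical line cannot be sent to two points on different vertical lines, and 2-transitivity again fails.
\end{itemize}

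The case $s=0$ is symmetric. So infinite transitivity forces $rs=2$.

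\emph{Sufficiency and identification of $\OOO$.} Let $rs=2$, so $\{r,s\}=\{1,2\}$. Both $L_r$ and $R_s$ fix the origin, since $r,s\ge1$. Therefore $G_{r,s}$ preserves $\{0\}$ and also its complement $\KK^2\setminus\{0\}$. I would first show directly that $G_{r,s}$ is transitive on $\KK^2\setminus\{0\}$:
\begin{itemize}
\item A point $(x,y)$ with $x\ne0$ can be moved by $R_s$ to a point with any prescribed $y$-coordinate, in particular one with $y\ne0$.
\item A point with $y\ne0$ can then be moved by $L_r$ to have any prescribed $x$-coordinate.
\item Alternating these moves connects any two nonzero points.
\end{itemize}
So the open orbit is $\OOO=\KK^2\setminus\{0\}$. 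Infinite transitivity on $\OOO$ is exactly \cite[Corollary~21]{LPS} and \cite[Theorem~1]{CT}, which I would cite.

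\emph{Main obstacle.} The paper promises an alternative proof of sufficiency in Section~\ref{sec4}, so citing is only a placeholder. For the self-contained route, I would reduce the case $(r,s)=(1,2)$ or $(2,1)$ to the tangent algebra. The generating vector fields are $V_{y^{r+1}}$ and $V_{x^{s+1}}$, that is, the pair $x^3$, $y^2$. By the Corollary above, $\lie(x^3,y^2)$ has finite codimension in $\KK[x,y]$. One then needs to pass from the Lie algebra to orbits on $m$-tuples: at generic tuples the generated fields should span the tangent space of $(\KK^2\setminus\{0\})^m$. One would need the approximation result Proposition~\ref{pimp} to realize these fields by group elements, and then argue that all $m$-tuples of distinct points, not only generic ones, lie in one orbit.

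That last step, handling non-generic tuples, is where I expect the real difficulty. It would be controlled by the missing monomials $1,x,y,xy,x^2$, which describe the linear part near the origin. The plan there is to first use $m-1$ transitivity to move the tuple into general position.
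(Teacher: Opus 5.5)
Your proposal is correct and is essentially the paper's own argument from Section~3: necessity comes from Proposition~\ref{prop-a} (the characters generate $\ZZ^2$ exactly when $rs\in\{0,2\}$), $rs=0$ is ruled out because one coordinate difference is invariant, and sufficiency is by citing \cite{LPS} and \cite{CT}; your explicit check that $G_{r,s}$ is transitive on $\KK^2\setminus\{0\}$ usefully justifies the identification of $\OOO$. For the self-contained route you sketch, the paper's Section~4 handles non-generic tuples not through the missing monomials but by four explicit moves with $L_1$ and $R_2$ that push any tuple of distinct nonzero points into $\Omega^m_1$, where Proposition~\ref{pimp} (with $d=1$) already gives a single orbit.
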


In the next section we give one more proof of this result over the field of complex numbers and also obtain the following theorem. 

\begin{theorem} \label{geninftra}
The action of the group $G_{r,s}$ on $\KK^2$ is generically infinitely transitive if and only if $rs\ge 2$.
\end{theorem}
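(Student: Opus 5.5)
We prove the two implications separately; the ``if'' direction rests on the Lie algebra results of Section~\ref{sec2} and the approximation results announced in the introduction (Propositions~\ref{pimp},~\ref{vpp}).

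\emph{Necessity.} If $rs\le 1$, then $G_{r,s}$ is a linear algebraic group of finite dimension $d$. For $rs=0$ it consists of translations in one coordinate together with shears; for $r=s=1$ it is $\SL_2$. Any orbit of $G_{r,s}$ on $(\KK^2)^m$ then has dimension at most $d$. Once $2m>d$, no orbit can be open. So generic infinite transitivity fails.

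\emph{Sufficiency.} Work over $\CC$ (the statement for general $\KK$ would then follow by a Lefschetz-principle argument, or we simply state it over $\CC$ as the paper does in Section~\ref{sec4}).

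\begin{enumerate}
\item The velocity fields of $L_r$ and $R_s$ are $y^r\frac{\partial}{\partial x}$ and $x^s\frac{\partial}{\partial y}$. Up to constants these are $V_{y^{r+1}}$ and $V_{x^{s+1}}$. Set $p=s+1$ and $q=r+1$.
\item The condition $rs\ge 2$ means exactly that either $\min(p,q)\ge 2$ and $\max(p,q)\ge 3$, or one of $r,s$ is $0$. The latter would force $rs=0$, so here $\min(p,q)\ge2$ and $\max(p,q)\ge3$ both hold.
\item The Lie algebra $\mathfrak g=\lie(x^p,y^q)$ contains no constants, so it embeds in $L$. By Proposition~\ref{prop-gapalgebra} it contains $x^{p+k(pq-p-q)}$ and $y^{q+k(pq-p-q)}$ for all $k\ge0$. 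By Lemma~\ref{lem1}, brackets of these give infinitely many further monomials. In particular $\mathfrak g$ is infinite-dimensional.
\item The key claim is that for every $m$ there are points $z_1,\dots,z_m\in\CC^2$ at which the evaluation map $\mathfrak g\to \bigoplus_i T_{z_i}\CC^2$, $V\mapsto (V(z_1),\dots,V(z_m))$, is surjective. Equivalently, the vector fields $V_f$ with $f\in\mathfrak g$ span $\CC^{2m}$ at a generic $m$-tuple.
\item To prove this claim, note that surjectivity fails only if some nonzero linear functional $\sum_i \langle \lambda_i, V_f(z_i)\rangle$ vanishes for all $f\in\mathfrak g$. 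The left side is a combination of first derivatives of $f$ at the points $z_i$. It therefore suffices that $\mathfrak g$ contains enough functions to separate the first-order jets at $m$ generic points. This holds as soon as $\mathfrak g$ contains, for instance, all monomials $x^ay^b$ with $(a,b)$ in a set large enough to interpolate arbitrary $1$-jets at $m$ generic points. Applying $\adj_{x^p}$ and $\adj_{y^q}$ repeatedly to the pure powers produces monomials $x^ay^b$ with nonzero coefficients. These exponents fill a translate of the cone spanned by $(p-1,-1)$ and $(-1,q-1)$ intersected with a sublattice of finite index. Since $(p-1)(q-1)>1$, this cone is two-dimensional. Consequently $\mathfrak g$ contains every monomial $x^ay^b$ for $(a,b)$ in a coset of a finite-index sublattice $\Lambda$, within a full-dimensional cone.
\item Restricted to the torus $(\CC^\times)^2$, such monomials $x^{a_0}y^{b_0}\chi$ with $\chi\in\Lambda$ already separate $1$-jets at $m$ points. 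These points must be chosen generic, in particular in distinct orbits of the finite group dual to $\ZZ^2/\Lambda$, and that group acts freely on the torus. Separation then follows from a Vandermonde/linear independence of characters argument.
\item Once the claim is established, apply Proposition~\ref{pimp} (Varolin-type approximation). Flows of fields in $\mathfrak g$ are limits of compositions of elements of $G_{r,s}$, so the $G_{r,s}$-orbit of $(z_1,\dots,z_m)$ in $(\CC^2)^m$ has full tangent dimension $2m$. By Proposition~\ref{vpp} and the implicit function theorem, this orbit contains an open neighbourhood of the point.
\item An orbit of a group generated by algebraic subgroups that contains a nonempty Euclidean-open set is constructible, and is therefore Zariski open.
\end{enumerate}

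The main obstacle is the spanning claim in steps~4--6: showing that the evaluation map of $\lie(x^p,y^q)$ is onto at generic $m$-tuples for every $m$. The difficulty is that this algebra has infinite codimension when $(p,q)\ne(2,3),(3,2)$, so it is not all of $L$. I would first try to pin down explicitly that all monomials in a shifted lattice cone lie in $\mathfrak g$, checking that the determinants from Lemma~\ref{lem1} do not vanish along the chosen bracket paths. The separation of $1$-jets would then follow by a characters argument.
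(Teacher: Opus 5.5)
Your necessity argument is the paper's. For sufficiency, \emph{step~7 does not work as written}, and this is the step that turns the Lie algebra into an open orbit.

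Proposition~\ref{vpp} cannot be invoked. Its hypothesis is that $D_1,D_2$ generate all of $L$, and its conclusion would make $G_{r,s}$ transitive on $\CC^2$. But for $r,s\ge 1$ the group $G_{r,s}$ fixes the origin. Correspondingly, by the exponent count in Proposition~\ref{prop-gapalgebra}, $\lie(x^p,y^q)$ contains no constants and neither $x$ nor $y$, so every field in $\lie(D_1,D_2)$ vanishes at $0$; for $rs\ge 3$ the codimension is even infinite.

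You also use Proposition~\ref{pimp} only as an approximation principle. In fact it has a concrete hypothesis, and its conclusion is already the open orbit you want. That hypothesis is exactly your step~3. Take $\Theta_1=\partial/\partial x$, $x_1=y$, $\Theta_2=\partial/\partial y$, $x_2=x$ and $d=pq-p-q=rs-1\ge 1$. Then $V_{y^{q+kd}}=(q+kd)\,y^{r+kd}\,\partial/\partial x$ and $V_{x^{p+kd}}=-(p+kd)\,x^{s+kd}\,\partial/\partial y$. Hence $\lie(D_1,D_2)\supseteq y^{r}\KK[y^{d}]\,\partial/\partial x\oplus x^{s}\KK[x^{d}]\,\partial/\partial y$, and Proposition~\ref{pimp} gives an orbit containing the Zariski open set $\Omega^m_d$. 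That is the paper's entire proof. With it, your steps 4--6 and 8 are unnecessary. The pure-power fields already span at points of $\Omega^m_d$ while vanishing at the other points (Lemma~\ref{lem-vf-spanning}), and $\Omega^m_d$ is Zariski open, not merely Euclidean open.

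If you want to keep your own route, two repairs are needed.

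First, the claim in step~5 that every monomial of a lattice coset in the cone occurs with nonzero coefficient is unproved. You do not need it. The bracket $\{x^{p+kd},y^{q+ld}\}=(p+kd)(q+ld)\,x^{p-1+kd}y^{q-1+ld}$ gives $x^{p-1}y^{q-1}\KK[x^d,y^d]\subseteq\mathfrak g$. This space already interpolates arbitrary $1$-jets at tuples with $x_iy_i\ne 0$ and pairwise distinct points $(x_i^d,y_i^d)$, because $(x,y)\mapsto(x^d,y^d)$ is \'etale there.

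Second, step~7 needs a correct bridge from ``evaluation is onto at $z$'' to ``$G_{r,s}\cdot z$ is open''. One option is to redo the approximation and implicit function argument from the proof of Proposition~\ref{pimp}, not Proposition~\ref{vpp}. The other is to argue algebraically:
\begin{enumerate}
\item By \cite{AFKKZ}, $O=G_{r,s}\cdot z$ is locally closed. This citation is also what justifies your step~8.
\item The ideal of $\overline O$ is stable under the diagonal extensions of $D_1,D_2$, hence under all of $\lie(D_1,D_2)$.
\item So $T_z\overline O$ contains the image of the evaluation map.
\item By homogeneity, $z$ is a smooth point of $\overline O$, so $\dim O=2m$.
\end{enumerate}
That algebraic version would be worth writing out. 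It avoids analysis entirely and works over any algebraically closed $\KK$ of characteristic zero, which is the kind of argument the paper's introduction asks for.
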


Let us finish this section with the following conjecture.  For an affine variety $X$, we denote by $N(X)$ the subalgebra of the Lie algebra of derivations $\Der(\KK[X])$ generated by all locally nilpotent derivations. Assume that $X$ has dimension at least $2$ and $D_1,\ldots D_k$ are locally nilpotent derivations on $\KK[X]$. Denote by $H_i$ the one-parameter subgroup of $\Aut(X)$ of the form $\exp(tD_i)$, $t\in\KK$. Also denote by $\lie(D_1,\ldots D_k)$ the Lie algebra generated by $D_1,\ldots D_k$. Assume that the group $G:=\langle H_1,\ldots, H_k\rangle$ acts on $X$ with an open orbit $\OOO$

\begin{conjecture}
If $\lie(D_1,\ldots D_k)$ has finite codimension in $N(X)$, then the group $G$ acts on $\OOO$ infinitely transitively. 
\end{conjecture} 

\section{Infinite transitivity on affine surfaces}
\label{sec4}

In this section we assume that the ground field is the field $\CC$ of complex numbers.  Our main aim is to prove Theorem~\ref{geninftra}.
We begin with the following elementary lemma.
\begin{lemma}
\label{lem-interpolate}
Let $z_1, \dots, z_{m-1}, z_m \in \CC$ and $d_0, d \in \ZZ_{\ge 0}$. If $z_m \neq 0$ and 
$$
z^d_m \neq z^d_j, \quad j = 1,\dots, m-1,
$$
then there exists a polynomial $f \in z^{d_0} \CC[z^d]$ with $f(z_1) = \dots = f(z_{m-1}) = 0$ and $f(z_{m}) = 1$.
\end{lemma}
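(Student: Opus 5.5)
The plan is to build $f$ as $z^{d_0}$ times a polynomial in $w=z^d$. Then $f$ automatically lies in $z^{d_0}\CC[z^d]$, and the conditions at the points $z_j$ become a one-variable interpolation problem in $w$.

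Put $w_j := z_j^d$ for $j=1,\dots,m$. By hypothesis, $w_m \neq w_j$ for every $j<m$. Take
$$
g(w) := \prod_{j=1}^{m-1} (w - w_j) \in \CC[w].
$$
It vanishes at every $w_j$ with $j<m$, and $g(w_m)\neq 0$ because no factor vanishes there. Then set
$$
f(z) := \frac{z^{d_0}\, g(z^d)}{z_m^{d_0}\, g(z_m^d)}.
$$
The denominator is a nonzero constant since $z_m\neq 0$ and $g(z_m^d)\neq 0$. The numerator lies in $z^{d_0}\CC[z^d]$, so $f$ does too. We have $f(z_j)=0$ for $j<m$ because the factor $g(z_j^d)=g(w_j)$ vanishes, and $f(z_m)=1$ by construction.

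The only place needing care is the degenerate exponents. If $d=0$, then $z_j^0=1$ for all $j$, so the hypothesis $z_m^0\ne z_j^0$ can hold only when $m=1$; in that case the empty product gives $g=1$ and the formula still works. If $d_0=0$, the factor $z^{d_0}$ is simply $1$. If $m=1$, there are no vanishing conditions and $f=(z/z_m)^{d_0}$ works directly. Repeated values among the $w_j$ for $j<m$ do no harm: the extra factors in $g$ are redundant but still nonzero at $w_m$. There is no real obstacle; the role of the hypothesis $z_m\neq 0$ is to keep $z_m^{d_0}\neq 0$ so that the normalisation is possible.
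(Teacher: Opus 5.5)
Your proposal is correct and is essentially the paper's proof: the paper takes $f(z)=c_0 z^{d_0}\prod_{j=1}^{m-1}(z^d-z_j^d)$ and chooses $c_0$ so that $f(z_m)=1$, which is exactly your normalised $z^{d_0}g(z^d)$. Your treatment of the degenerate cases $d=0$, $d_0=0$ and $m=1$ is an addition that the paper leaves implicit.
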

\begin{proof}
We choose 
$$
f(z) = c_0 z^{d_0} \prod_{j=1}^{m-1} (z^d - {z_j}^d).
$$
We can find $c_0 \in \CC$ such that $f(z_{m}) = 1$ under the assumptions made on $z_{m}$.
\end{proof}

Now let $X$ be a connected smooth affine algebraic surface. In this section we call the subgroup $\exp(tD)$, $t\in\CC$, associated with a locally nilpotent derivation $D$ on $\CC[X]$, the \emph{flow}
 of $D$.
 
\begin{lemma} \label{pol}
Assume that there exist two locally nilpotent derivations $\Theta_1$ and $\Theta_2$ on $\CC[X]$ 
that span the tangent space $T_xX$ in every point $x$ of an open subset $\Omega\subseteq X$.
Then there are functions $x_1,x_2\in\CC[X]$ such that $\Ker(\Theta_1)=\CC[x_1]$ and 
$\Ker(\Theta_2)=\CC[x_2]$. 
\end{lemma}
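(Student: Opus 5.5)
The plan is to use the standard structure theory of kernels of locally nilpotent derivations on affine surfaces. First, $\Theta_1$ and $\Theta_2$ are nonzero: if, say, $\Theta_1=0$, then the tangent vectors $\Theta_1(x),\Theta_2(x)$ span at most a line, contradicting the hypothesis at points of $\Omega$. For a nonzero locally nilpotent derivation $D$ on the coordinate ring of an irreducible affine variety $X$, the kernel $\Ker(D)$ is factorially closed (if $ab\in\Ker D$ with $ab\neq 0$, then $a,b\in\Ker D$, by a degree argument with the degree function $\deg_D$). Moreover, its transcendence degree over $\CC$ equals $\dim X-1=1$; this is seen by choosing a local slice $s$ with $D(s)\in\Ker D\setminus\{0\}$, so that $\CC[X]_{D(s)}=(\Ker D)_{D(s)}[s]$.

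The next step is to show that $\Ker(\Theta_i)$ is a polynomial ring in one variable. By Zaitsev--Zariski type results (or the classical Zariski/Fujita lemma used in the literature on $\GG_a$-actions on surfaces, e.g. in works of Flenner--Zaidenberg), a factorially closed subalgebra of transcendence degree one inside an affine domain is finitely generated and normal. It is therefore the coordinate ring of a smooth affine curve $C_i$, and the quotient morphism $\pi_i\colon X\to C_i$ has general fibres that are $\GG_a$-orbits, i.e. copies of $\CC$. Since $\Ker\Theta_i$ is factorially closed in $\CC[X]$, its units are those of $\CC[X]$. So it remains to show that $C_i\cong\CC^1$, that is, $C_i$ is rational with one place at infinity and no non-constant units.

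This is where the second derivation enters, and it is the main obstacle. Consider the flow of $\Theta_2$. Since $\Theta_1,\Theta_2$ span $T_xX$ on $\Omega$, the $\Theta_2$-orbits are transversal to the fibres of $\pi_1$ at generic points. Hence $\pi_1$ restricted to a general $\Theta_2$-orbit $\cong\CC$ is a non-constant morphism $\CC\to C_1$. Its image is constructible and dense, so $C_1$ is dominated by $\CC$. By Lüroth, $C_1$ is rational. A non-constant map $\CC\to C_1$ extends to $\mathbb P^1\to\overline{C_1}$, sending the single point $\infty$ to the boundary; the image of $\CC$ misses at most that one boundary point. This forces $\overline{C_1}\setminus C_1$ to consist of exactly one point, because $C_1$ is affine and so has at least one point at infinity, and it has at most one. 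Therefore $C_1\cong\CC$ and $\Ker(\Theta_1)=\CC[x_1]$.

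The argument is symmetric, giving $\Ker(\Theta_2)=\CC[x_2]$. The delicate points are justifying finite generation and normality of the kernel, so that the quotient curve exists, and checking that the composite $\CC\to C_1$ is genuinely non-constant. The latter follows from $\Theta_2(x_1')\ne 0$ for a generator-type function $x_1'$ of the kernel, which is exactly the spanning condition on $\Omega$.
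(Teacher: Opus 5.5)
Your proof is correct. The first half matches the paper in substance; the second half takes a different and somewhat more economical route.

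\textbf{How the paper argues.} The paper obtains the curve from Winkelmann's theorem: $\Ker(\Theta_1)=\CC[C]$ with $C$ quasi-affine, hence a smooth affine curve. It proves $C$ rational by noting that $(y_1,y_2)\mapsto\exp(y_1\Theta_1)\exp(y_2\Theta_2)x_0$ dominates $X$. It then rules out more than one point at infinity by showing that $\CC[X]$ has only constant units: a unit is invariant under both flows, so its level curves would be preserved, contradicting the spanning condition. A rational smooth affine curve without non-constant units is $\CC$.

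\textbf{How you argue.} You get the curve from Zariski's finiteness theorem together with factorial closedness. You then settle rationality and the single place at infinity in one step. One $\Theta_2$-orbit gives a non-constant morphism $f\colon\CC\to C_1$. Its extension $\bar f\colon\mathbb{P}^1\to\overline{C_1}$ is surjective, so $\overline{C_1}\cong\mathbb{P}^1$ and $\overline{C_1}\setminus C_1\subseteq\{\bar f(\infty)\}$.

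\textbf{Comparison.} Your route needs neither the rationality of $X$ nor the lemma on units of $X$. It also isolates exactly where the spanning hypothesis enters. The precise form of your ``generator-type function'' remark is this: a non-constant $h\in\Ker(\Theta_1)$ has $dh_x\neq 0$ at some $x\in\Omega$. Since $dh_x$ kills $\Theta_1(x)$ and $T_xX$ is spanned by $\Theta_1(x),\Theta_2(x)$, we get $(\Theta_2h)(x)\neq 0$, so the orbit through $x$ maps non-constantly. The paper's route, in exchange, yields the independently useful fact that $\CC[X]^*=\CC^*$.

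\textbf{A correction to your first half.} A factorially closed subalgebra of transcendence degree one in an arbitrary affine domain need not be normal. For example, $\CC[y^2,y^3]$ is factorially closed in $\CC[x,y^2,y^3]$; it is even the kernel of $\partial/\partial x$. What you actually use is the following. Factorial closedness of $A=\Ker(\Theta_1)$ gives $A=\mathrm{Frac}(A)\cap\CC[X]$. Zariski's finiteness theorem then gives finite generation. Normality of $A$ follows from normality of $\CC[X]$, which holds because $X$ is smooth.
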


\begin{proof}
Since the variety $X$ is smooth, the algebra $\CC[X]$ is integrally closed. The subalgebra $\Ker(\Theta_1)$ coincides with the algebra of invariants of a $\GG_a$-action, so it is 
integrally closed as well. By~\cite[Theorem~1]{Win}, $\Ker(\Theta_1)$ is the algebra of regular functions on a quasi-affine variety $C$. But $C$ has dimension $1$~\cite[Principle~11(e)]{Fr}, so it is a smooth affine curve. Take any point $x_0\in\Omega$. The map
$$
\CC^2\to X, \quad (y_1,y_2)\mapsto \exp(y_1\Theta_1)\exp(y_2\Theta_2) x_0
$$
is a dominant morphism, so the surface $X$ is rational. Since the inclusion ${\Ker(\Theta_1)\subseteq\CC[X]}$ induces a dominant morphism $X\to C$, the curve $C$ is rational as well. 

Any invertible regular function $f$ on $X$ is invariant with respect to the flows of $\Theta_1$ and $\Theta_2$, see~\cite[Principle~1(b)]{Fr}. So the flows preserve the curves where $f$ is constant. This contradicts to the fact that
$\Theta_1$ and $\Theta_2$ generate the tangent space at a generic point. We conclude that $\CC[X]$ contains no non-constant regular function. The same holds for $\CC[C]$. So $C$ is an affine line and $\Ker(\Theta_1)$ is the algebra of polynomials in one variable. The same arguments work for $\Ker(\Theta_2)$.
\end{proof} 

We use notation of Lemma~\ref{pol} and for any positive integers $m, d$ and an open subset $\Omega\subseteq X$ we define a subset in $X^m$ as
$$
\Omega^m_d=\{(P_1,\ldots,P_m)\in \Omega^m \, | \, x_i^d(P_s)\ne x_i^d(P_l)\ne 0,\, i=1,2,\, s\ne l,\,  s,l=1,\ldots, m\}.
$$

\begin{lemma}
\label{lem-vf-spanning}
Under the assumptions of Lemma~\ref{pol}, let $(P_1, \dots, P_m)\in\Omega^m_d$.  Then there exist locally nilpotent derivations $V_{j,k}$ on $\CC[X]$ with $j =1,\ldots, m$ and $i =1, 2$
of the form
$$
V_{j,i} = f_{j,k} \Theta_i, \quad f_{j,i} \in x_i^{d_i} \CC[x_i^d],
$$
such that for every $j =1,\ldots, m$ we have  
$$
\mathop{\mathrm{span}} \{ (V_{j,i})_{P_j} \}_{i=1,2} = T_{P_j} X
$$
and for every $l \neq j$ and every $i$ we have
$$
(V_{j,i})_{P_l} = 0.
$$
\end{lemma}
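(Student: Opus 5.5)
The plan is to take the Lemma~\ref{pol} functions $x_1,x_2$, with $\Ker(\Theta_i)=\CC[x_i]$, and build the derivations one point at a time, multiplying each $\Theta_i$ by a kernel function produced by Lemma~\ref{lem-interpolate}. The exponent $d_i$ in the statement is to be read as a fixed non-negative integer ($d_0$ in Lemma~\ref{lem-interpolate}); we keep $d_0$ arbitrary.

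The first ingredient is a standard fact. If $D$ is a locally nilpotent derivation and $f\in\Ker(D)$, then $fD$ is again locally nilpotent, because $(fD)^n(g)=f^nD^n(g)$. Every polynomial $f\in\CC[x_i]$ lies in $\Ker(\Theta_i)$. Hence any derivation of the form $f\Theta_i$ with $f\in x_i^{d_0}\CC[x_i^d]\subseteq \CC[x_i]$ is locally nilpotent.

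Now fix $j$ and $i\in\{1,2\}$. Set $z_l=x_i(P_l)$, and order the points so that $P_j$ plays the role of $z_m$. The defining conditions of $\Omega^m_d$ give $z_j^d\ne 0$, so $z_j\ne 0$, and $z_l^d\ne z_j^d$ for $l\ne j$. These are exactly the hypotheses of Lemma~\ref{lem-interpolate}. It therefore yields a one-variable polynomial $\phi_{j,i}\in z^{d_0}\CC[z^d]$ with $\phi_{j,i}(z_l)=0$ for $l\ne j$ and $\phi_{j,i}(z_j)=1$. Put
$$
f_{j,i}=\phi_{j,i}(x_i)\in x_i^{d_0}\CC[x_i^d], \qquad V_{j,i}=f_{j,i}\Theta_i .
$$
At each point $P$ we have $(V_{j,i})_P=f_{j,i}(P)\,(\Theta_i)_P$. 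Consequently $(V_{j,i})_{P_l}=0$ for $l\ne j$, and $(V_{j,i})_{P_j}=(\Theta_i)_{P_j}$.

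Finally, $P_j\in\Omega$, so by hypothesis $(\Theta_1)_{P_j}$ and $(\Theta_2)_{P_j}$ span $T_{P_j}X$. Hence
$$
\mathop{\mathrm{span}}\{(V_{j,1})_{P_j},(V_{j,2})_{P_j}\}=T_{P_j}X .
$$
No step is a real obstacle. The only point needing care is checking that the membership conditions in $\Omega^m_d$ match the nonvanishing and separation hypotheses of Lemma~\ref{lem-interpolate}, coordinate by coordinate.
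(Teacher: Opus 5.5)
Your proposal is correct and is essentially the paper's proof. For each $i$, apply Lemma~\ref{lem-interpolate} to the values $x_i(P_l)$ to get $f_{j,i}\in x_i^{d_i}\CC[x_i^d]$ with $f_{j,i}(P_l)=\delta_{jl}$, and set $V_{j,i}=f_{j,i}\Theta_i$.

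You also make explicit two points the paper leaves tacit: $f_{j,i}\Theta_i$ is locally nilpotent because $f_{j,i}\in\Ker(\Theta_i)$, and membership in $\Omega^m_d$ supplies exactly the hypotheses of the interpolation lemma. The latter needs $m\ge 2$: for $m=1$ the defining conditions of $\Omega^1_d$ are vacuous, so one must additionally assume $x_i(P_1)\neq 0$ when $d_i>0$.
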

\begin{proof}
By Lemma \ref{lem-interpolate} there exist functions $f_{j,i} \in x_i^{d_i} \CC[x_i^d]$ such that
$$
f_{j,i}(x_k(P_l)) = \delta_{j l}.
$$
Then the vector fields $V_{j,i} = f_{j,i} \Theta_i$ have the desired properties. 	
\end{proof}

\begin{proposition} \label{pimp}
Let $X$ be a connected smooth affine algebraic surface. Assume that there exist two locally nilpotent derivations $\Theta_1$ and $\Theta_2$ on $\CC[X]$ with $\Ker(\Theta_i)=\CC[x_i]$, 
$i=1,2$, that span the tangent space $T_xX$ in every point $x$ of an open subset $\Omega\subseteq X$. Assume also that there exist locally nilpotent derivations $D_1,\ldots,D_k$ on
$\CC[X]$ such that 
$$
\lie(D_1,\ldots,D_k)\supseteq x_1^{d_1}\CC[x_1^d]\Theta_1\oplus x_2^{d_2}\CC[x_2^d]\Theta_2
$$
for some non-negative integers $d_1,d_2$ and some positive integer $d$. 
Then for every positive integer $m$ the group $G$ generated by the flows of  $D_1,\ldots,D_k$ acts on $X^m$ with an open orbit $\OOO_m$, and the action of $G$ on $\OOO_m$ is infinitely transitive. Moreover, the orbit $\OOO_m$  contains the subset $\Omega^m_d$. 
\end{proposition}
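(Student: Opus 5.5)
The plan is to follow the Varolin-type approach: first show that $\Omega^m_d$ lies in a single $G$-orbit of $X^m$, and that this orbit is open. Infinite transitivity on $\OOO_m$ is then obtained by applying the same argument to $X^{mn}$ and noting that tuples of points of $X^m$ correspond to points of $X^{mn}$.

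\textbf{Step 1: elements of the Lie algebra give flows in the group closure.} Each field $V=f\Theta_i$ with $f\in x_i^{d_i}\CC[x_i^d]$ is locally nilpotent, since $\Theta_i(f)=0$. Its flow $\exp(tV)$ is therefore an algebraic $\GG_a$-action, but it need not lie in $G$. What we can show is that its restriction to any compact set is a locally uniform limit of elements of $G$. The reason is that $\lie(D_1,\ldots,D_k)$ is spanned by iterated brackets of the $D_j$. Flows of sums are limits of compositions of flows (Trotter product formula), and flows of brackets are limits of commutators $\exp(\sqrt{t}A)\exp(\sqrt{t}B)\exp(-\sqrt{t}A)\exp(-\sqrt{t}B)$ raised to the power $n$. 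Iterating these two formulas on the complete vector fields $D_j$ gives the claim, as in Varolin and in \cite{An-1}.

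\textbf{Step 2: the orbit map has surjective differential.} Fix $P=(P_1,\ldots,P_m)\in\Omega^m_d$. Lemma~\ref{lem-vf-spanning} supplies $2m$ fields $V_{j,i}$ lying in $\lie(D_1,\ldots,D_k)$. Their values at the $m$ points span $T_PX^m=\bigoplus_j T_{P_j}X$, because $V_{j,i}$ vanishes at $P_l$ for $l\ne j$ and the two fields $V_{j,1},V_{j,2}$ span $T_{P_j}X$. Consider the map
$$\Phi\colon \CC^{2m}\to X^m,\qquad (t_{j,i})\mapsto \prod_{j,i}\exp(t_{j,i}V_{j,i})\,(P).$$
It is holomorphic and its differential at $0$ is an isomorphism, so $\Phi$ is a local biholomorphism near $0$.

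\textbf{Step 3: approximation and openness.} By Step 1, we approximate each factor $\exp(t_{j,i}V_{j,i})$ uniformly on compacts by elements of $G$ depending continuously on $t$. This produces maps $\Phi_n$ close to $\Phi$ on a small closed ball $B$. The topological degree argument (or Rouché) then shows that $\Phi_n(B)$ contains a fixed neighbourhood $U$ of $\Phi(\tfrac12 B)$. I expect this to be the main technical obstacle: the approximations are only continuous, not holomorphic, so we need to choose the approximating families continuously in $t$ and use a degree argument instead of the inverse function theorem. In particular $P$ is an interior point of its own $G$-orbit in the analytic topology. Since this holds at every point of $\Omega^m_d$, every $G$-orbit meeting $\Omega^m_d$ is open in $\Omega^m_d$.

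\textbf{Step 4: conclusion.} The set $\Omega^m_d$ is the complement of a proper Zariski closed subset of the irreducible variety $X^m$, hence connected. A connected set partitioned into open orbits lies in a single orbit $\OOO_m$. Moreover $\OOO_m$ is open, being a union of translates $g(U)$. It is Zariski open because $G$ is generated by algebraic groups, so its orbits are constructible \cite{AFKKZ}. For infinite transitivity, take $n$ tuples $Q^{(1)},\ldots,Q^{(n)}\in\OOO_m$ with pairwise distinct entries and another such family $R^{(1)},\ldots,R^{(n)}$. Move each to $\Omega^{mn}_d$ by a single element of $G$. This is possible because the bad locus is a proper closed subset, and a generic element of $G$ moves a given finite configuration off it, using Step 3 at the level of $X$. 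Then apply $\OOO_{mn}\supseteq\Omega^{mn}_d$ to connect the two configurations.
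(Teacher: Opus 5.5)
Your proof that $\Omega^m_d$ lies in a single open $G$-orbit $\OOO_m$ is correct, but it is organized differently from the paper's. The paper fixes $P_1,\dots,P_{m-1}$ and joins $P_m$ to $P_{m+1}$ by a path avoiding the level sets $x_1=\zeta a_j$, $x_2=\zeta b_j$. It then drags the last point along this path, using finitely many approximating submersions and the Implicit Function Theorem. You instead show that every point of $\Omega^m_d$ is interior to its orbit, and conclude from the connectedness of $\Omega^m_d$.

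Your route has two advantages. First, it avoids the reduction to moving one point at a time; to connect two arbitrary tuples that way, one must choose intermediate tuples that stay in $\Omega^m_d$. Second, your degree argument is more robust than the paper's appeal to Jacobians of the approximants. Those approximants come from commutator formulas in $\sqrt{t}$ and are not holomorphic in the flow times.

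Continuity near $t=0\in\CC$ can be arranged by treating $\mathrm{Re}\,t$ and $\mathrm{Im}\,t$ separately (via $iV$), and by reversing the order of the commutator for negative real times. Also, you may enlarge $\Omega$ to the Zariski open locus where $\Theta_1\wedge\Theta_2\neq 0$, which only enlarges $\Omega^m_d$. Then $\OOO_m=\bigcup_{g\in G}g(\Omega^m_d)$ is visibly Zariski open, and constructibility of orbits is not needed.

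The infinite-transitivity part of Step~4, however, has a genuine gap. You claim that a generic element of $G$ moves any finite configuration of distinct points into $\Omega^{mn}_d$, and this is false. The bad locus is a proper closed subset, but it can contain $G$-stable families of configurations. Step~3 on $X$ only says that single-point orbits are open.

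Here is a counterexample. Take $G=G_{r,s}$ with $rs\ge 3$. It satisfies all the hypotheses with $x_1=y$, $x_2=x$ and $d=rs-1\ge 2$; this is exactly how Theorem~\ref{geninftra} is proved. Choose $\zeta\neq 1$ with $\zeta^{rs-1}=1$. The map $t_0(x,y)=(\zeta x,\zeta^s y)$ commutes with $L_r$ and $R_s$, and $x_i^d(t_0P)=x_i^d(P)$ for $i=1,2$. Hence, for $P\in\OOO_1$ with $t_0P\neq P$, we get $g(P,t_0P)=(gP,t_0gP)\notin\Omega^2_d$ for every $g\in G$.

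In fact the clause cannot hold in the stated generality. By Proposition~\ref{prop-a}, the action on $\OOO_1$ is not even 2-transitive in this example. For $m\ge 2$, the diagonal action preserves the relation ``two points of $X^m$ have a common entry'', so literal 2-transitivity on $\OOO_m$ fails as well. Your restriction to configurations with pairwise distinct entries already changes the claim, and even that restricted claim fails by the example above.

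The paper's own proof establishes only the open orbit $\OOO_m\supseteq\Omega^m_d$ for every $m$, i.e.\ generic infinite transitivity, and that is all that is used later. Genuine infinite transitivity in Theorem~\ref{teor2} needs $d=1$ together with an explicit argument moving arbitrary tuples into $\Omega^m_1$. You should replace the end of Step~4 by this weaker conclusion.
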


\begin{proof}
Throughout this proof, all spaces are endowed with the Euclidean topology or the subspace topology inherited thereof.

Let $P_1, \dots, P_m, P_{m+1} \in \Omega^{m+1}_d$ and $a_j = x_1(P_j)$, $b_j = x_2(P_j)$. It suffices to construct an automorphism $\Phi \colon X \to X$ such that $\Phi(P_j) = P_j$ for every 
$j =1, \dots, m-1$ and $\Phi(P_m) = P_{m+1}$. Indeed, the subset $\Omega^m_d$ is open in $X^m$, so the group $G$ acts with an open orbit which is the union of translations of $\Omega^m_d$. 

We choose a path $\gamma \colon [0,1] \to X$ from $P_m$ to $P_{m+1}$ such that its image satisfies
\begin{enumerate}
\item $x_1|_{\gamma((0,1))} \colon \gamma((0,1))\to \CC \setminus \{\zeta a_1, \dots \zeta a_{m+1} \,:\, \zeta^d = 1\}$;
\item $x_2|_{\gamma((0,1))} \colon \gamma((0,1))\to \CC \setminus \{\zeta b_1, \dots \zeta b_{m+1} \,:\, \zeta^d = 1\}$.
\end{enumerate}
Such a path $\gamma$ exists since
\[X \setminus \left( x_1^{-1}(\{\zeta a_1, \dots \zeta a_{m+1} \,:\, \zeta^d = 1\}) \cup x_2^{-1}(\{\zeta b_1, \dots \zeta b_{m+1} \,:\, \zeta^d = 1\}) \right)\]
is path-connected. 

For each $\tau \in [0,1]$ we apply Lemma \ref{lem-vf-spanning} to the points $P_1, \dots, P_{m-1}, \gamma(\tau)$ and obtain the vector fields $V_{j,i,\tau}$. Denote their flows by $\varphi_{j,i,\tau}^t$, where $t$ stands for the flow time. The map $\CC^m\otimes \CC^2 \to X^m$ defined by
\[
(t_{j,i})_{j=1, \dots, m; i=1,2} \mapsto \varphi_{m,2,\tau}^{t_{m,2}} \circ \varphi_{m,1,\tau}^{t_{m,1}} \circ \dots \circ \varphi_{1,2,\tau}^{t_{1,2}} \circ \varphi_{1,1,\tau}^{t_{1,1}}(P_1, \dots P_{m-1}, \gamma(\tau))
\]
it is a submersion in a neighborhood of $0$. Indeed, its Jacobian determinant is non-zero. 

By assumption, each of the vector fields $V_{j,i,\tau}$ is a finite Lie combination of the locally nilpotent derivations $D_1,\ldots, D_k$. By \cite[Corollary~4.8.4]{For}, a finite composition $\Phi_{j,i,\tau}^t$ of the flows of 
$D_1, \ldots, D_k$ approximates the flow $\varphi_{j,i,\tau}^t$ of $V_{j,i,\tau}$ on a relatively compact neighborhood of $P_1, \dots, P_{m-1}, \gamma(\tau)$ and for a small enough flow-time $|t| < \delta$ arbitrarily well. Hence, we can also approximate the above composition 
$$
\varphi_{m,2,\tau}^{t_{m,2}} \circ \varphi_{m,1,\tau}^{t_{m,1}} \circ \dots \circ \varphi_{1,2,\tau}^{t_{1,2}} \circ \varphi_{1,1,\tau}^{t_{1,1}}(P_1, \dots P_{m-1}, \gamma(\tau))
$$ 
by a finite composition 
$$
\Phi_{m,2,\tau}^{t_{m,2}} \circ \Phi_{m,1,\tau}^{t_{m,1}} \circ \dots \circ \Phi_{1,2,\tau}^{t_{1,2}} \circ \Phi_{1,1,\tau}^{t_{1,1}}(P_1, \dots P_{m-1}, \gamma(\tau))
$$ 
of flows of the locally nilpotent derivations $D_1, \ldots, D_k$. We denote this composition by $\Phi_{\tau} \colon \CC^{m} \otimes \CC^{2} \to X^m$, considered as a function depending on the flow times $t_{j,k}$. This approximation will eventually become a submersion as well on a neighborhood $U_\tau$ of ${0 \in \CC^{m} \otimes \CC^{2}}$. Indeed, if a holomorphic map approximates another holomorphic map uniformly on compacts, then so do their derivatives; In particular, a good enough approximation will need to have non-zero Jacobian determinant. Since $[0,1]$ is compact, we can cover $\gamma([0,1])$ by finitely many sets of the form $\Phi_{\tau_1}(U_{\tau_1}), \dots, \Phi_{\tau_s}(U_{\tau_s})$. By the Implicit Function Theorem, we now find for each of path-times $\tau_1, \dots, \tau_s$ the parameters in $\CC^m \otimes \CC^2$ such that we can successfully move $P_m$ to $P_{m+1}$ along the path $\gamma$ in finitely many steps. The resulting composition is the desired automorphism which is algebraic as a finite composition of flows of locally nilpotent derivations. 
\end{proof} 

\begin{remark}
In~\cite[Conjecture~5.23]{AKZ}, the following result is expected. Let $X$ be an affine variety, $D_1,\ldots,D_k$ be locally nilpotent derivations on $\KK[X]$, and $H_1,\ldots,H_k$ be their flows. Consider one more locally nilpotent derivation $D$ on $\KK[X]$ with flow $H$. Then the subgroup $H$ lies in the closure of the subgroup $G=\langle H_1,\ldots,H_k\rangle$  with respect to the ind-topology on $\Aut(X)$ if and only if $D\in\lie(D_1,\ldots,D_k)$. The reasoning used in the proof of Proposition~\ref{pimp} may be considered as analytical arguments in favor of this conjecture.
\end{remark}

\smallskip

Now we are ready to prove the main results. 

\begin{proof}[Proof of Theorem~\ref{geninftra}]
If $rs\le 1$ then $G_{r,s}$ is a linear algebraic group, and this group can not act with an open orbit on varieties $(\CC^2)^m$ of arbitrarily high dimension. Let us assume further that $rs\ge 2$. 

In the notation of Proposition~\ref{pimp} take $X=\CC^2$ with $\Theta_1=\frac{\partial}{\partial x}$ and $\Theta_2=\frac{\partial}{\partial y}$. Then $\Omega=\CC^2$ and $x_1=y, x_2=x$. 
Also we let $k=2$ and $D_1=y^r\frac{\partial}{\partial x}$, $D_2=x^s\frac{\partial}{\partial y}$. Under the identification of the Lie algebra $L$ with $A/I$ in Section~\ref{sec2},
we identify $D_1$ with $\frac{1}{r+1}y^{r+1}$ and $D_2$ with $\frac{1}{s+1}y^{s+1}$.  By Proposition~\ref{prop-gapalgebra}, we obtain
$$
\lie(D_1, D_2)\supseteq x_1^{d_1}\CC[x_1^d]\Theta_1\oplus x_2^{d_2}\CC[x_2^d]\Theta_2
$$
with 
$$
d=(r+1)(s+1)-(r+1)-(s+1)=rs-1, \quad d_1=r+1, \quad d_2=s+1.
$$  
Now Proposition~\ref{pimp} claims that the action of the group $G_{r,s}$ on $\CC^2$ is generically infinitely transitive provided $rs\ge 2$. 
\end{proof}

\smallskip

\begin{proof}[Proof of Theorem~\ref{teor2}]
By Theorem~~\ref{geninftra} and Proposition~\ref{pimp}, the group $G_{r,s}$ acts on $(\CC^2)^m$ with an open orbit for all $m$, and the open orbit contains the subset $\Omega^m_d$. 
With $rs=2$ we have $d=1$, so the subset $\Omega^m_1$ consists of collections
$$
((x_1,y_1), \ldots, (x_m,y_m)) \quad \text{with} \ x_i\ne 0, \ y_i\ne 0, \ x_i\ne x_j, \ y_i\ne y_j \ \text{for\, all} \ 1\le i\ne j\le m. 
$$
So it remains to show that any point $((x_1,y_1), \ldots, (x_m,y_m))\in(\CC^2\setminus\{0\})^m$ can be sent to a point in $\Omega^m_1$ by an element of $G_{r,s}$. 
We obtain this by performing four steps. In each step, we achieve the fulfillment of new conditions on a point from $(\CC^2\setminus\{0\})^m$, while maintaining the conditions obtained in previous steps. Without loss of generality, we assume $r=1$ and $s=2$. 

\smallskip 

{\it Step 1:}\ $x_i\ne 0$ for all $i$. To obtain this, we apply the transformation $(x+\alpha y, y)$ with $\alpha\ne -\frac{x_i}{y_i}$ for all $i$ with $y_i\ne 0$. If $y_i=0$ then $x_i\ne 0$ by our assumption.  

\smallskip 

{\it Step 2:}\ in the collection, there is no pair of the form $(x_i,y_i)$ and $(-x_i,-y_i)$. To get this, we apply the transformation $(x, y+\beta x^2)$ with $\beta\ne -\frac{y_i+y_j}{2x_i^2}$ for all $i, j$ with $x_j=-x_i$. 

\smallskip 

{\it Step 3:}\ $x_i\ne 0$ for all $i$ and $x_i^2\ne x_j^2$ for all $i\ne j$. To achieve this, we apply the transformation $(x+\alpha y, y)$ with 
$$
\alpha\ne -\frac{x_i}{y_i}  \ \ \text{and} \ \ \alpha\ne\frac{x_i-x_j}{y_j-x_i} \ \text{for\, all} \ y_i\ne y_j \ \ \text{and} \ \ \alpha\ne -\frac{x_i+x_j}{y_i+x_j} \ \text{for\, all} \ y_i\ne -y_j.
$$
If  $y_i=y_j$ then $x_i\ne x_j$ and this transformation does not change the inequality. Also if $y_i=-y_j$ then $ x_i\ne -x_j$ and this transformation does not change the inequality as well. 

\smallskip 

{\it Step 4:}\ $y_i\ne 0$ for all $i$ and $y_i\ne y_j$ for all $i\ne j$. To obtain this, we apply the transformation $(x, y+\beta x^2)$ with
$$
\beta\ne -\frac{y_i}{x_i^2}  \ \ \text{and} \ \ \beta\ne\frac{y_i-y_j}{x_j^2-x_i^2}  \ \text{for\, all} \ i\ne j.
$$
\smallskip

The proof of Theorem~\ref{teor2} is completed. 
\end{proof}

Let us give one more application of Proposition~\ref{pimp}. 

\begin{proposition} \label{dan}
Let us consider the Danielewski surface $Y_p := \{ (x,y,z) \in \CC^3 \,:\, xy = p(z)\}$, where $p \in \CC[z]$ has only simple zeroes and is of degree $\nu \geq 2$.
Let
$$
D_1 = p'(z) \frac{\partial}{\partial x} + y \frac{\partial}{\partial z}, \quad
D_2 = p'(z) \frac{\partial}{\partial y} + x \frac{\partial}{\partial z}, \quad
D_3 = y D_1, \quad
D_4 = x D_2.
$$
Then the group generated by flows of the locally nilpotent derivations $D_1, D_2, D_3, D_4$ acts generically infinitely transitively on $Y_p$.
\end{proposition}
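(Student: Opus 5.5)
Apply Proposition~\ref{pimp} with $X=Y_p$, $\Theta_1=D_1$, $\Theta_2=D_2$. Since $p$ has only simple zeroes, $Y_p$ is smooth; it is connected and affine. $D_1$ and $D_2$ are locally nilpotent: $D_1(y)=0$, $D_1(z)=y$, $D_1(x)=p'(z)$, and the analogous formulas hold for $D_2$. Both are tangent to $Y_p$, because $D_1(xy-p(z))=p'(z)y-p'(z)y=0$. Their kernels are $\Ker(D_1)=\CC[y]$ and $\Ker(D_2)=\CC[x]$, so $x_1=y$ and $x_2=x$. This is standard for Danielewski surfaces and is also forced by Lemma~\ref{pol} once spanning is checked. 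The vectors $(p'(z),0,y)$ and $(0,p'(z),x)$ are linearly independent whenever $p'(z)\neq 0$. We may therefore take $\Omega=\{p'(z)\neq 0\}\cap Y_p$, or even $\Omega=\{x\ne0 \text{ or } y\ne 0\}$ suitably, which is a nonempty Zariski open subset. Since $D_3=yD_1$ and $D_4=xD_2$ are kernel multiples of locally nilpotent derivations, they are locally nilpotent as well.

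The main task is to show that
$$
\lie(D_1,D_2,D_3,D_4)\supseteq y\CC[y]D_1\oplus x\CC[x]D_2,
$$
that is, $d=1$ and $d_1=d_2=1$, or at least such an inclusion with some positive $d$. I would compute brackets of the form $[f(y)D_1,g(x)D_2]$. The relevant values are $D_1(x)=p'(z)$ and $D_2(y)=p'(z)$, while $D_1(y)=D_2(x)=0$. This gives
$$
[f(y)D_1,g(x)D_2]=f g'(x)p'(z)D_2 - g f'(y)p'(z)D_1+fg[D_1,D_2].
$$
Here $[D_1,D_2]$ is a vector field involving $p''(z)$, which does not lie in the span of $\CC[y]D_1\oplus\CC[x]D_2$. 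For this reason I would not expect the bracket alone to isolate the desired fields. Instead, the strategy is to take iterated brackets with a kernel element and exploit the identity $[D_1,hD_1]=D_1(h)D_1$. For example, $[D_3,[D_1,D_3]]$ produces terms like $D_1(y)=0$, so those vanish. I therefore expect to need mixed brackets such as $[D_1,D_4]=[D_1,xD_2]=p'(z)D_2+x[D_1,D_2]$. Combining this with $x[D_1,D_2]$ obtained from other brackets would isolate $p'(z)D_2$ and similar fields.

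An alternative, cleaner route is to use the known structure of the Lie algebra of volume-preserving (with respect to the form $dx\wedge dz/x$) algebraic vector fields on $Y_p$. That algebra is spanned by the Hamiltonian fields, and in it $D_1$ and $D_2$ correspond to Hamiltonians $y$ and $x$ up to sign. Under this identification $f(y)D_1$ corresponds to $F(y)$ with $F'=f$, and $g(x)D_2$ to $G(x)$; the Poisson bracket satisfies $\{x,y\}=p'(z)$, $\{x,z\}=x$ and $\{y,z\}=-y$. I would then argue, as in Section~\ref{sec2}, that the Lie algebra generated by the Hamiltonians $y,x,y^2,x^2$ (corresponding to $D_1,\dots,D_4$) contains all $y^k$ and $x^k$, or at least $y\CC[y]$ and $x\CC[x]$. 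The natural approach is an inductive argument mimicking Proposition~\ref{pfirst}, using that brackets of $x^a$ and $y^b$ produce $p'(z)$-terms of $z$-degree $\nu-1$, followed by further brackets with $x$ and $y$ to reduce back to pure powers. This inductive generation step is where I expect the real difficulty to lie: it requires careful bookkeeping of $z$-degrees via the $\CC^*$-grading $\deg x=1$, $\deg y=-1$, $\deg z=0$, which keeps pure powers of $x$ in positive degree and pure powers of $y$ in negative degree.

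Once the inclusion holds for some $d_1,d_2,d$, Proposition~\ref{pimp} yields an open orbit of the group generated by the flows of $D_1,\dots,D_4$ on $Y_p^m$ for every $m$, containing $\Omega^m_d$. This is exactly generic infinite transitivity.
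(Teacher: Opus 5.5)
\textbf{There is a genuine gap: the key Lie-algebra inclusion is never proved.}

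Your reduction is the same as the paper's. You apply Proposition~\ref{pimp} with $\Theta_1=D_1$, $\Theta_2=D_2$, $\Ker D_1=\CC[y]$, $\Ker D_2=\CC[x]$ and $\Omega=Y_p\cap\{p'(z)\neq 0\}$. Your alternative $\Omega=\{x\neq 0 \text{ or } y\neq 0\}$ is wrong, because where $p'(z)=0$ both $D_1$ and $D_2$ are multiples of $\frac{\partial}{\partial z}$.

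The one substantive hypothesis of Proposition~\ref{pimp} is an inclusion $y^{d_1}\CC[y^d]D_1\oplus x^{d_2}\CC[x^d]D_2\subseteq\lie(D_1,\dots,D_4)$. You only list strategies for it and explicitly leave the generation step open, so the proposition cannot yet be applied. The paper takes this inclusion from \cite[Corollary~4.6]{An-3}, with $d=1$ and $d_1=d_2=\nu-2$. Your target $d_1=d_2=1$ is stronger than this. For $\nu\ge 5$ it would need the Hamiltonians $y^3,\dots,y^{\nu-2}$, and nothing in your sketch produces them.

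Your first route is also a dead end as stated. Every element of $\lie(D_1,\dots,D_4)$ preserves $\omega$. However, the divergence of $p'(z)D_2$ is $D_2(p'(z))=xp''(z)\neq 0$, so $p'(z)D_2$ can never be isolated in this algebra.

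\textbf{How to close it.} Your Hamiltonian picture is the right one. Because $\{y,y\}=0$ and $\{y,z\}=-y$, on $\CC[y,z]$ one has $\adj_y=-y\frac{\partial}{\partial z}$ and $\adj_{y^2}=-2y^2\frac{\partial}{\partial z}$, and these commute. Also $\{x,y^k\}=ky^{k-1}p'(z)$. Hence for $a+b=\nu-1$,
\[
\adj_y^{a}\adj_{y^2}^{b}\adj_x(y^k)=(-1)^{\nu-1}2^{b}k\,p^{(\nu)}\,y^{k+\nu-2+b},
\]
where $p^{(\nu)}$ is a nonzero constant.
\begin{enumerate}
\item Taking $k=1,2$ (the Hamiltonians of $D_1$, $D_3$) and $0\le b\le \nu-1$ gives all $y^j$ with $\nu-1\le j\le 2\nu-1$.
\item Then $b=0$ raises exponents by $\nu-2$ (use $b=1$, raising by $1$, if $\nu=2$). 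By induction, all $y^j$ with $j\ge \nu-1$ lie in the algebra.
\item Since $\{y^j,\cdot\}=-jy^{j-1}D_1$, this says $y^{\nu-2}\CC[y]D_1\subseteq\lie(D_1,\dots,D_4)$.
\item Symmetrically, $\adj_x=x\frac{\partial}{\partial z}$ on $\CC[x,z]$ and $\{y,x^k\}=-kx^{k-1}p'(z)$, which gives $x^{\nu-2}\CC[x]D_2\subseteq\lie(D_1,\dots,D_4)$.
\end{enumerate}
Proposition~\ref{pimp} then applies with $d=1$ and $d_1=d_2=\nu-2$.
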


\begin{proof}
We apply Proposition \ref{pimp} with $\Theta_1 = D_1$ and $\Theta_2 = D_2$. Here, we have 
\[
\Omega = Y_p \setminus \{p'(z) = 0 \},
\]
see \cite[Lemma 4.12]{An-3}. 
Note that $\Ker(\Theta_1)=\CC[y]$ and $\Ker(\Theta_2)=\CC[x]$. According to \cite[Corollary~4.6]{An-3}, we have
$$
y^{\nu-2} \CC[y] \Theta_1 \oplus x^{\nu-2} \CC[x] \Theta_2 \subseteq \lie(D_1, D_2, D_3, D_4),
$$
and the result follows. 
\end{proof}

\begin{remark}
This action is in fact infinitely transitive, since $d=1$ and one can move any $m$-tuple of points to the subset $\Omega^m_1$
using the flows of $D_1, D_2, D_3, D_4$; see~\cite[Theorem~4.11]{An-3}.
\end{remark}

\smallskip 

Finally, we present one more result. Let $X$ be a smooth affine algebraic variety and $\omega$ be an algebraic volume form on $X$, i.e., a nowhere vanishing section of the canonical bundle. Denote by $L_{\omega}(X)$ the Lie algebra of all polynomial vector fields on $X$ that preserve the form~$\omega$.  Let $D_1,\ldots, D_k$ be locally nilpotent derivations on $\CC[X]$ and $H_i$ be the flow of $D_i$.

\begin{proposition}  \label{vpp} 
If $D_1,\ldots, D_k$ generate the Lie algebra $L_{\omega}(X)$, then the group $G:=\langle H_1,\ldots, H_k\rangle$ acts on $X$ infinitely transitively.  
\end{proposition}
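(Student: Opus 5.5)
We follow the scheme of the proof of Proposition~\ref{pimp}, replacing the two coordinate directions by the full Lie algebra $L_{\omega}(X)$. Fix $m$ and two $m$-tuples of pairwise distinct points in $X$. As in the proof of Proposition~\ref{pimp}, it suffices to show the following: for pairwise distinct $P_1,\ldots,P_{m-1}$ and any two points $Q,Q'\in X\setminus\{P_1,\ldots,P_{m-1}\}$, there is an element of $G$ fixing each $P_j$ and sending $Q$ to $Q'$. Applying this one point at a time then gives infinite transitivity on all of $X$; here we use that $X$ is connected, which is implicit since it carries a volume form and is a smooth affine variety where the statement is meaningful.

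\textbf{Step 1.} We need volume-preserving fields that interpolate given tangent vectors at finitely many points. For pairwise distinct points $R_1,\ldots,R_m$ and any $j$, we look for fields $W_{j,1},\ldots,W_{j,n}\in L_{\omega}(X)$, with $n=\dim X$, that span $T_{R_j}X$ and vanish at all $R_l$, $l\neq j$.

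To construct them, use the fact that $L_{\omega}(X)$ is generated by LNDs whose flows act transitively; more concretely, use that $L_{\omega}(X)$ contains the $D_i$ and is closed under $D\mapsto fD$ for $f\in\Ker D$. Take $f\in\Ker D_i$ that vanishes at $R_l$ for $l\ne j$ and equals $1$ at $R_j$. This is possible once the values of invariants separate the points; a generic choice of points, reached after a preliminary move by $G$, makes this automatic.

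The remaining question is whether the vectors $D_i(R_j)$ span $T_{R_j}X$. If instead they span a proper subspace at every point, then the Lie algebra they generate is contained in a distribution that is not everywhere of full rank. That is impossible when the algebra is $L_{\omega}(X)$, since this algebra spans $T_xX$ at every $x$: locally every vector field $\omega$-dual to an exact $(n-1)$-form lies in it, and on an affine variety enough exact algebraic forms exist. A cleaner alternative is to use an algebraic Cartan-type argument: since $L_{\omega}(X)$ is a module over nothing but constants, we instead take brackets $[fD_i,gD_{i'}]$ to realize the needed interpolation.

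\textbf{Step 2.} Choose a path $\gamma$ from $Q$ to $Q'$ avoiding the $P_j$; this is possible because $X$ is a connected manifold of complex dimension at least $2$. For each $\tau$, form the composition of flows of the $W_{j,i}$ applied to $(P_1,\ldots,P_{m-1},\gamma(\tau))$. This map is a submersion at $0$. Each $W_{j,i}$ is a finite Lie combination of $D_1,\ldots,D_k$, so \cite[Corollary~4.8.4]{For} approximates its flow by compositions of flows in $G$. Compactness of $[0,1]$ together with the Implicit Function Theorem then moves $Q$ to $Q'$ in finitely many steps while keeping the $P_j$ fixed, exactly as in the proof of Proposition~\ref{pimp}.

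\textbf{Main obstacle.} The hard part is Step 1: proving that $L_{\omega}(X)$ contains fields that vanish at prescribed finitely many points and span the tangent space at another, at \emph{every} configuration and not only generic ones. Generic configurations would only give generic infinite transitivity. My first attempt would use the fact that $G$ already acts with an open orbit $\OOO$ on $X^m$. Since the orbits of $G$ on $X^m$ are images of the connected group action, and the Lie algebra contains every field of the form $fD$ with $f\in\Ker D$ and $D$ an LND, the tangent space of every orbit contains the whole interpolation span. That span is full at every point of the configuration space. Hence every orbit is open, and by connectedness of the configuration space of distinct $m$-tuples, which holds because $\dim X\ge2$, there is only one orbit.
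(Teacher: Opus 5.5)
\textbf{There is a genuine gap.} The step everything depends on is never established: that elements of $\lie(D_1,\ldots,D_k)$ realize arbitrary tangent vectors at \emph{every} configuration $R_1,\ldots,R_m$ of distinct points. Your last paragraph simply asserts it (``That span is full at every point of the configuration space''), and the mechanisms you offer in Step~1 fail.
\begin{itemize}
\item The $D_i$ need not span $T_{R_j}X$ at a given point. The generators $y^2\frac{\partial}{\partial x}$ and $\frac{\partial}{\partial x}-x\frac{\partial}{\partial y}$ of $L$ from Proposition~\ref{pfirst} span only $\langle\frac{\partial}{\partial x}\rangle$ at the origin.
\item Your distribution argument gives at best spanning at generic points. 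It is also false in dimension $\ge 3$, because brackets leave a non-involutive distribution (e.g.\ $\frac{\partial}{\partial x}$ and $\frac{\partial}{\partial y}+x\frac{\partial}{\partial z}$).
\item Every $f\in\Ker D_i$ is constant along $D_i$-orbits, so no such $f$ separates $R_j$ from $R_l=\exp(D_i)R_j$.
\item Moving the configuration into general position by $G$ presupposes the transitivity you are trying to prove.
\item The bracket ``alternative'' $[fD_i,gD_{i'}]$ is not an argument.
\end{itemize}

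\textbf{The missing idea.} The interpolating fields need not be locally nilpotent or of the form $fD$. They only have to lie in $L_\omega(X)=\lie(D_1,\ldots,D_k)$, and $L_\omega(X)$ itself interpolates at every configuration. Since $d\omega=0$, we have $\mathcal{L}_V\omega=d(\iota_V\omega)$, and $V\mapsto\iota_V\omega$ is an isomorphism onto algebraic $(n-1)$-forms. Hence for every algebraic $(n-2)$-form $\eta$, the field $V_\eta$ with $\iota_{V_\eta}\omega=d\eta$ lies in $L_\omega(X)$.

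Let $g_1,\ldots,g_N$ be the coordinates of an embedding $X\subseteq\CC^N$. Because $X$ is affine, $1$-jets of regular functions $h_I$ can be prescribed at finitely many points. Taking $\eta=\sum_I h_I\,dg_{i_2}\wedge\cdots\wedge dg_{i_{n-1}}$ (for $n=2$, simply $\eta=h$) then makes $d\eta(R_j)$, hence $V_\eta(R_j)$, arbitrary and independent for $j=1,\ldots,m$. You noticed exact forms, but used them only for the incorrect claim about the $D_i$. The $V_\eta$ need not be complete; local flows near the finitely many points suffice for the approximation and Implicit Function Theorem argument.

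\textbf{A smaller point.} $G$ is neither algebraic nor a Lie group, so ``the tangent space of every orbit'' needs justification. Either of the following works:
\begin{itemize}
\item your Step~2, which shows that $G\cdot P$ contains a neighbourhood of $P$ once diagonal Lie combinations of the $D_i$ span $T_PX^m$;
\item the fact that $G$-orbits on $X^m$ are locally closed, hence smooth, subvarieties \cite{AFKKZ}, to which each $D_i$, and therefore all of $\lie(D_1,\ldots,D_k)$, is tangent.
\end{itemize}

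\textbf{Comparison with the paper.} With these repairs, your argument (every orbit in the configuration space is open, and the configuration space is connected) becomes a self-contained proof. The paper instead imports the result from \cite[Corollary~8]{An-1}, which rests on the algebraic volume density property and approximation. The second variant above is even purely algebraic.

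Finally, connectedness of $X$ and $\dim X\ge 2$ are genuine implicit hypotheses. They do not follow from the existence of $\omega$.
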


This result follows directly from~\cite[Corollary~8]{An-1}. Its proof is based on the so-called algebraic volume density property and approximation of certain analytic objects by algebraic ones. Proposition~\ref{vpp} and its variations are used in \cite{An-1, An-2, An-3} to prove that some groups generated by finitely many $\GG_a$-subgroups are infinitely transitive. 

\smallskip

The Lie algebra $L$ of all polynomial vector fields on $\CC^2$ with zero divergence coincides with $L_{\omega}(\CC^2)$, where $\omega$ is the standard volume form on $\CC^2$; see Section~\ref{sec2}. So, we obtain 

\begin{corollary}
If locally nilpotent derivations $D_1,\ldots, D_k$  of the algebra $\CC[x,y]$ generate the Lie algebra $L$, then the group $G:=\langle H_1,\ldots, H_k\rangle$ acts on $\CC^2$ infinitely transitively.   
\end{corollary}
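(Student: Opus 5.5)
This corollary is a specialization of Proposition~\ref{vpp} to $X=\CC^2$, so the plan is to check its hypotheses and then invoke it.

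First, $\CC^2$ is a smooth affine variety. The standard form $\omega=dx\wedge dy$ is a nowhere vanishing section of the canonical bundle, so it is an algebraic volume form.

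Second, I will identify $L_\omega(\CC^2)$ with $L$. A polynomial vector field $V=f_1\frac{\partial}{\partial x}+f_2\frac{\partial}{\partial y}$ preserves $\omega$ exactly when the Lie derivative $\mathcal{L}_V\omega$ vanishes. By Cartan's formula,
\[
\mathcal{L}_V\omega=d(\iota_V\omega)=d(f_1\,dy-f_2\,dx)=\Bigl(\frac{\partial f_1}{\partial x}+\frac{\partial f_2}{\partial y}\Bigr)dx\wedge dy .
\]
So $V$ preserves $\omega$ if and only if its divergence is zero, and therefore $L_\omega(\CC^2)=L$. This is the identification already recalled at the start of Section~\ref{sec2}.

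Third, by assumption the locally nilpotent derivations $D_1,\ldots,D_k$ of $\CC[x,y]$ generate $L=L_\omega(\CC^2)$. Each $D_i$ lies in $L$, so it is divergence-free; this is consistent with, though also implied by, the generation hypothesis. Proposition~\ref{vpp} then gives directly that $G=\langle H_1,\ldots,H_k\rangle$ acts infinitely transitively on $X=\CC^2$.

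There is essentially no obstacle beyond these checks. The only point to watch is that Proposition~\ref{vpp} requires generation of $L_\omega(X)$ as a Lie algebra, and not merely as a module or up to finite codimension; the hypothesis of the corollary states exactly this.

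Infinite transitivity on all of $\CC^2$, rather than on an open orbit, is also plausible. The group $\Aut(\CC^2)$ acts transitively on $\CC^2$, and $L$ contains the translation fields $\frac{\partial}{\partial x}$ and $\frac{\partial}{\partial y}$, whose flows move every point. This is consistent with the conclusion of Proposition~\ref{vpp}.

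As an example, by Proposition~\ref{pfirst} the corollary applies to $\partial_1$ and $\partial_2$.
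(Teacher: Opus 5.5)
Your proposal is correct and matches the paper's argument: the corollary is Proposition~\ref{vpp} applied to $X=\CC^2$ with the standard volume form $\omega=dx\wedge dy$, using $L_\omega(\CC^2)=L$, which you check explicitly via Cartan's formula where the paper simply cites Section~\ref{sec2}. Your closing remarks about translations and the example $\partial_1,\partial_2$ are not needed for the proof, but they do no harm.
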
 

\begin{corollary} \label{two} 
The one-parameter subgroups
$$
(x+\alpha y^2, y) \quad \text{and} \quad (x+\beta y, y-\beta x -\frac{\beta^2}{2}),  \quad \alpha, \beta \in \CC,
$$
generate a group $G$ that is infinitely transitive on $\CC^2$. 
\end{corollary}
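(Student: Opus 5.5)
The plan is to use the preceding corollary: it is enough to show that the velocity fields of the two one-parameter subgroups are locally nilpotent derivations of $\CC[x,y]$ and that together they generate the Lie algebra $L$. This is exactly the situation of Proposition~\ref{pfirst}.

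First, compute the infinitesimal generators. For $(x+\alpha y^2, y)$, differentiating at $\alpha=0$ gives $y^2\frac{\partial}{\partial x}=\partial_1$. For the second family, differentiating $(x+\beta y,\, y-\beta x-\frac{\beta^2}{2})$ at $\beta=0$ gives $y\frac{\partial}{\partial x}-x\frac{\partial}{\partial y}$. This is not $\partial_2$, so we should check that the family really is the flow of a single derivation. Indeed, $\partial_2=\frac{\partial}{\partial x}-x\frac{\partial}{\partial y}$ has flow
$$
x(\beta)=x+\beta, \qquad y(\beta)=y-\beta x-\frac{\beta^2}{2},
$$
obtained by solving $\dot x=1$, $\dot y=-x$. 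Comparing with the stated formula, the second family should be read as $(x+\beta, y-\beta x-\frac{\beta^2}{2})$, the flow of $\partial_2$. The term ``$\beta y$'' appears to be a typo for ``$\beta$''; the quadratic term $-\frac{\beta^2}{2}$ only matches the flow of $\partial_2$. So the first step is to confirm this: verify the group law $\phi_{\beta}\circ\phi_{\beta'}=\phi_{\beta+\beta'}$ and that $\partial_2$ is locally nilpotent, which holds because $\partial_2(x)=1$ and $\partial_2(y)=-x$. It follows that both subgroups are $\GG_a$-subgroups, namely $\exp(\alpha\partial_1)$ and $\exp(\beta\partial_2)$.

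Second, by Proposition~\ref{pfirst}, $\partial_1$ and $\partial_2$ generate $L$. The corollary preceding the statement, which rests on Proposition~\ref{vpp} with the standard volume form, then shows that $G=\langle H_1,H_2\rangle$ acts infinitely transitively on $\CC^2$. One should also note that $H_2$ is not a root subgroup, since $\partial_2$ is not homogeneous with respect to the torus grading; this is the remark made in the introduction.

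The only delicate point is matching the written formula with the flow of $\partial_2$. If the formula were taken literally, with $\beta y$, the map would not even define a one-parameter group. So I would verify the flow computation carefully and state explicitly that the subgroup in question is $\exp(\beta\partial_2)$.
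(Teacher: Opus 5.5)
Your proposal is correct and is essentially the paper's proof: identify the two subgroups as $\exp(\alpha\partial_1)$ and $\exp(\beta\partial_2)$, with $\partial_1,\partial_2$ from Proposition~\ref{pfirst}, then apply the preceding corollary, which rests on Proposition~\ref{vpp}. You are also right that the printed second family is a typo: the flow of $\partial_2$ is $(x+\beta,\ y-\beta x-\frac{\beta^2}{2})$, whereas the literal formula with $\beta y$ is not a one-parameter group and is not even invertible at $\beta=\pm i$, since its Jacobian is $1+\beta^2$; the paper's one-line proof passes over this silently.
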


\begin{proof}
The two one-parameter subgroups above are the subgroups ${H_1=\exp(\alpha\partial_1)}$ and ${H_2=\exp(\beta\partial_2)}$, where 
$\partial_1= y^2\frac{\partial}{\partial x} $ and $\partial_2= \frac{\partial}{\partial x}-x\frac{\partial}{\partial y}$ are the locally nilpotent derivations from Proposition~\ref{pfirst}.
\end{proof} 

\section*{Acknowledgments}

Some results of this paper were obtained during the stay of the second author at the University of Ljubljana in November 2024, and he would like to thank this institution and especially 
Franc Forstneri\v c for invitation and hospitality. 

\section*{Funding}

The first author was supported by the European Union (ERC Advanced grant HPDR, 101053085 to Franc Forstneri\v{c}). The second author was supported by the grant RSF 25-11-00302.


\end{document}